\theoremstyle{plain}
\newtheorem{theorem}{Theorem}
\newtheorem{lemma}{Lemma}
\newtheorem{proposition}{Proposition}
\theoremstyle{definition}
\newtheorem{definition}{Definition}
\newtheorem{assumption}{Assumption}
\theoremstyle{remark}
\newtheorem{remark}{Remark}
\newtheorem{example}{Example}
\Crefname{assumption}{Assumption}{Assumptions}
\DeclareMathOperator*{\argmin}{arg\,min}
\DeclareMathOperator{\prox}{prox}
\DeclareMathOperator{\refl}{refl}
\DeclareMathOperator{\im}{im}
\newcommand{\norm}[1]{\left\lVert#1\right\rVert}
\def\endstatement{\hfill$\square$}
\newcommand{\bv}{\mathbold{b}}
\newcommand{\dv}{\mathbold{d}}
\newcommand{\x}{\mathbold{x}}
\newcommand{\y}{\mathbold{y}}
\newcommand{\w}{\mathbold{w}}
\newcommand{\z}{\mathbold{z}}
\newcommand{\p}{\mathbold{p}}
\newcommand{\zero}{\bm{0}}
\newcommand{\one}{\bm{1}}
\newcommand{\reals}{\mathbb{R}}
\newcommand{\Am}{\mathbold{A}}
\newcommand{\Dm}{\mathbold{D}}
\newcommand{\Bm}{\mathbold{B}}
\newcommand{\Sm}{\mathbold{S}}
\newcommand{\Xm}{\mathbold{X}}
\newcommand{\cv}{\mathbold{c}}
\newcommand{\uv}{\mathbold{u}}
\newcommand{\I}{\mathcal{I}}
\newcommand{\T}{\mathcal{T}}
\newcommand{\X}{\mathcal{X}}
\newcommand{\N}{\mathbb{N}}
\newcommand{\R}{\mathbb{R}}
\newcommand{\Ts}{T_\mathrm{s}}
\newcommand{\Nc}{N_\mathrm{C}}
\newcommand{\Np}{N_\mathrm{P}}
\newcommand{\lip}{\lambda}
\newcommand{\Op}{\mathsf{O}}
\newcommand{\gh}{h} 
\journal{Signal Processing}
\begin{document}

\begin{frontmatter}



\title{Extrapolation-based Prediction-Correction Methods for Time-varying Convex Optimization}

\affiliation[kth]{organization={School of Electrical Engineering and Computer Science, KTH Royal Institute of Technology},
            country={Sweden}}

\affiliation[unipd]{organization={Department of Information Engineering (DEI), University of Padova},
            country={Italy}}

\affiliation[ensta]{organization={UMA, ENSTA Paris, Institut Polytechnique de Paris, 91120 Palaiseau},
            country={France}}

\author[kth]{Nicola Bastianello}
\ead{nicolba@kth.se}

\author[unipd]{Ruggero Carli}
\ead{carlirug@dei.unipd.it}

\author[ensta]{Andrea Simonetto}
\ead{andrea.simonetto@ensta-paris.fr}

\begin{abstract}
In this paper, we focus on the solution of online optimization problems that arise often in signal processing and machine learning, in which we have access to streaming sources of data. We discuss algorithms for online optimization based on the prediction-correction paradigm, both in the primal and dual space. In particular, we leverage the typical regularized least-squares structure appearing in many signal processing problems to propose a novel and tailored prediction strategy, which we call extrapolation-based. By using tools from operator theory, we then analyze the convergence of the proposed methods as applied both to primal and dual problems, deriving an explicit bound for the tracking error, that is, the distance from the time-varying optimal solution. We further discuss the empirical performance of the algorithm when applied to signal processing, machine learning, and robotics problems.
\end{abstract}

\begin{keyword}
online optimization, prediction-correction, operator theory, graph signal processing
\end{keyword}

\end{frontmatter}


\section{Introduction}\label{sec:intro}

Continuously varying optimization programs have appeared as a natural extension of time-invariant ones when the cost function, the constraints, or both, depend on a time parameter and change continuously in time. This setting captures relevant problems in the data streaming era, see \emph{e.g.}~\cite{dallanese_optimization_2019, simonetto_time-varying_2020} and references therein.

We focus here on linearly constrained regularized least-squares problems of the form
\begin{eqnarray}\label{eq:tv-general-problem-2}
	\mathsf{P}(t): \, \x^*(t), \y^*(t) &\!\!=&\!\!\argmin_{\x \in \reals^n, \y \in \reals^m} \, \underbrace{\frac{1}{2}\|\Dm\x-\dv(t)\|^2 + f_0(\x)}_{=:f(\x;t)} + \gh(\y), \\
	&&\mathrm{s.t.}\  \Am\x + \Bm\y = \cv,
\end{eqnarray}
where, $t\in \reals_+$ is non-negative, continuous, and is used to index time;  $f: \reals^n\times \reals_{+} \to \reals$ is a \emph{smooth strongly convex function} uniformly in time; in addition, $\gh: \reals^m \to \reals \cup \{+\infty\}$ is a closed convex and proper function, matrices $\Dm \in \reals^{q \times n}, \Am \in \reals^{p \times n}$ and $\Bm \in \reals^{p \times m}$, and vector $\cv \in \reals^p$. Finally $\dv(t):\reals_{+} \to \reals^{q}$ is a vector function of time, describing the data. 

Problem $\mathsf{P}(t)$ is typical in signal processing, and depending on the specific values for the matrices and vectors, it could yield a streaming, i.e., time-varying, LASSO, Group-LASSO, the elastic net, as well as various regularized least-squares problems. The structure of Problem $\mathsf{P}(t)$ is so typical that we specifically use it to devise novel algorithms for its resolution. In particular, we use the fact that the Hessian of $f$ is constant in time. 

For handy notation, when $\x=\y$, we introduce the primal problem,
\begin{equation}\label{eq:tv-general-problem-1}
	\mathsf{P}_{\mathsf{p}}(t): \quad \x^*(t) =\argmin_{\x \in \reals^n} \, f(\x; t) + g(\x),
\end{equation} 
with $g: \reals^n \to \reals \cup \{+\infty\}$ a closed convex and proper function and $f$ defined as before.

Solving any of the two problems means determining, at each time $t$, the optimizers $\x^*(t)$ or $(\x^*(t), \y^*(t))$, and therefore, computing the optimizers' trajectory (\emph{i.e.}, the optimizers' evolution in time), up to some arbitrary but fixed accuracy. We notice here that problems $\mathsf{P}(t), \mathsf{P}_{\mathsf{p}}(t)$ are not available a priori, but they are revealed as time evolves: \emph{e.g.}, problem $\mathsf{P}(t')$ will be revealed at $t= t'$ and known for all $t\geq t'$. In this context, we are interested in modeling how the problems $\mathsf{P}(t), \mathsf{P}_{\mathsf{p}}(t)$ evolve in time.

We will look at primal and dual first-order methods. Problem~\eqref{eq:tv-general-problem-1} is the online version of a composite optimization problem (\emph{i.e.}, of the form $f+g$) and we will consider primal first-order methods. Note here that $g$ could be the indicator function of a closed convex set, thereby enabling modeling constrained optimization problems varying with time. Problem~\eqref{eq:tv-general-problem-2} is the online version of the alternating direction method of multipliers (ADMM) setting, and we will consider dual first-order methods. The idea is to present in a unified way a broad class of online optimization algorithms that can tackle instances of problems~\eqref{eq:tv-general-problem-2}-\eqref{eq:tv-general-problem-1}. Further note that the two problems could be transformed into each other, if one so wishes, but we prefer to treat them separately to encompass both primal and dual methods.

The focus is on discrete-time settings as in~\cite{zavala_real-time_2010, dontchev_euler_2013, simonetto_prediction_2017}. In this context, we will use sampling arguments to reinterpret~\cref{eq:tv-general-problem-2,eq:tv-general-problem-1}  as a sequence of time-invariant problems. In particular, focusing here only on~\cref{eq:tv-general-problem-1} for simplicity, upon sampling the objective function $ f(\x; t) + g(\x)$ at time instants $t_{k}$,  $k=0,1,2,\dots$, where the sampling period $\Ts := t_k - t_{k-1}$ can be chosen arbitrarily small, one can solve the sequence of time-invariant problems
\begin{equation}\label{eq:ti-general-problem-1}
	\mathsf{P}_{\mathsf{p}}(t_k): \quad \x^*(t_k) = \argmin_{\x \in \reals^n} \, f(\x; t_k) + g(\x), \qquad k \in \N. 
\end{equation}
By decreasing $\Ts$, an arbitrary accuracy may be achieved when approximating problem~\cref{eq:tv-general-problem-1} with \cref{eq:ti-general-problem-1}. In this context, we will hereafter assume that $\Ts$ is a small constant and $\Ts<1$. However, solving \cref{eq:ti-general-problem-1} for each sampling time $t_k$ may not be computationally affordable in many application domains, even for moderate-size problems. We therefore consider here approximating the discretized optimizers' trajectory $\{\x^*(t_k)\}_{k \in \N}$ by using first-order methods. In particular, we will focus on prediction-correction methods~\cite{zavala_real-time_2010, dontchev_euler_2013, simonetto_prediction_2017, paternain2019prediction}. This methodology arises from non-stationary optimization~\cite{Moreau1977, Polyak1987}, parametric programming~\cite{Robinson1980,Guddat1990,zavala_real-time_2010,dontchev_euler_2013,Hours2014,Kungurtsev2017}, and continuation methods in numerical mathematics~\cite{Allgower1990}.

This paper extends the current state-of-the-art methods, \emph{e.g.},~\cite{simonetto_prediction_2017, Paper4}, by offering the following contributions.

\begin{enumerate}[leftmargin=*]
\item We provide novel prediction-correction methods in both primal space and dual space for online optimization with constraints. In doing so, we show how existing prediction-correction online algorithms can be generalized with the help of operator theoretical tools. In particular, the abstract methodology we discuss includes special cases such as the ones based on (projected) gradient method~\cite{simonetto_prediction_2017}, proximal point, forward-backward splitting, Peaceman-Rachford splitting~\cite{ECC}, as well as the ones based on dual ascent~\cite{Paper4}. Moreover, the proposed algorithms includes new online algorithms based on the method of multipliers, dual forward-backward splitting, and ADMM. With our methodology, we obtain unified results, and a general error bound (\cref{pr:generic-error-bound}), which allows one to plug any prediction strategy they are working with and obtain the corresponding asymptotic error.

\item By leveraging the structure of our signal processing problem, we propose and theoretically characterize a prediction strategy which applies extrapolation on a set of past cost functions collected by the online algorithm\footnote{While extrapolation is a known technique in numerical mathematics~\cite{quarteroni_numerical_2007,Qi2019}, here we fully characterize its theoretical asymptotic error, and we use it in a constrained setting.}. The number of historical costs used can be tuned in order to increase accuracy. Differently from the Taylor expansion-based prediction strategy of \textit{e.g.} \cite{simonetto_class_2016}, the prediction can be computed without needing to compute derivatives of the cost. Under suitable assumptions, we analyze the convergence of the resulting online algorithm, in particular by deriving an upper bound to the asymptotic tracking error (\textit{i.e.} the distance from the optimal trajectory $\{ \x^*(t_k) \}_{k \in \N}$).

\item We further apply the proposed extrapolation prediction strategy to problems with linear constraints such as Problem~\eqref{eq:tv-general-problem-2}, for which we prove convergence within a bounded neighborhood of the optimal trajectories $\{  \x^*(t_k)$, $ \y^*(t_k) \}_{k \in \N}$.

\end{enumerate}

\subsection{Related work}

Time-varying, streaming, and online problems have a long tradition in signal processing and machine learning. The recent surveys~\cite{dallanese_optimization_2019, simonetto_time-varying_2020} cover some key references. From the signal processing literature, we can cite here early algorithms for recursive least-squares and compressive sensing~\cite{Angelosante2010,Cattivelli2008,Vaswani2015,Yang2015}, as well as for dynamic filtering~\cite{Asif2014,Balavoine2015,Charles2016}. These signal processing problems are special cases of problem~\cref{eq:ti-general-problem-1}, and the algorithms proposed in this paper can then be applied to solve them.

More recently, the works~\cite{Jakubiec2013,Ling14admm,simonetto_class_2016,simonetto_prediction_2017} are in line with what we present here, in the sense that they depict time-varying optimization solutions where given a new problem at time $t$, one attempts at finding an approximate optimizer of it. In this sense, past data help warm starting the algorithm at time $t$, but do not influence the new sampled problem. In this paper, we take the same approach as these previous works, but propose a novel warm-starting strategy, and theoretically analyze its performance for the different class of problems~\cref{eq:tv-general-problem-2}.

This line of research is also related to online convex optimization (OCO) \cite{shalev_online_2011,hall2015online,dixit_online_2019}, which was formulated to analyze learning from streaming data. However, differently from our approach, in OCO the set-up is adversarial, in the sense that only information observed up to time $t_k$ can be used to compute the decision to be applied at time $t_{k+1}$\footnote{Please refer to \cref{rem:taylor-prediction} for further discussions.}. Once the decision is applied the learner gains access to the new cost function and incurs a regret; importantly, the cost function may be chosen adversarially to maximize this regret.

Finally, we mention the related approach of streaming optimization discussed in \cite{Hamam2022}. Similarly to the approach in this paper, a new cost function is revealed at each time; with the difference that also a new optimization \textit{variable} is added, and the goal is to solve the overall problem being pieced together over time. In our approach, we focus on a time-varying cost function with a fixed size unknown variable, and assume that the cost function observed at time $t_k$ provides all the information required to compute (in principle) the optimal solution.

\smallskip
\noindent{\bf Organization.} In \cref{sec:background}, we introduce the necessary background. In \cref{sec:PC-framework,sec:convergence}, we present the proposed prediction-correction methodology and the novel extrapolation-based prediction approach, and analyze its performance. \Cref{sec:dual} describes the dual version of the proposed approach. \Cref{sec:simulations} concludes with several numerical examples.


\section{Mathematical Background}\label{sec:background}

\subsection{Notation}
Vectors are written as $\x\in\reals^n$ and matrices as $\Am\in\reals^{p\times n}$. We denote by $\lambda_\mathrm{M}(\Am)$ and $\lambda_\mathrm{m}(\Am)$ the largest and smallest eigenvalues of a square matrix $\Am \in \reals^{n \times n}$. We use $\|\cdot\|$ to denote the Euclidean norm in the vector space, as well as the respective induced norms for matrices. In particular, given a matrix $\Am \in \R^{p \times n}$, we have $\norm{\Am} = \sigma_\mathrm{M}(\Am) = \sqrt{\lambda_\mathrm{M}(\Am^\top \Am)}$, where $\sigma_\mathrm{M}$ denotes the largest singular value. The gradient of a differentiable function $f(\x;t): \reals^{n}\times \reals_{+} \to \reals$ with respect to $\x$ at the point $(\x,t)$ is denoted as $\nabla_{\x} f(\x; t) \in \reals^n$, and $\nabla_{\x\x} f(\x; t) \in \reals^{n\times n}$ denotes the Hessian of $f(\x;t)$ w.r.t. $\x$. The notations $\frac{\partial^{(I)}}{\partial t^{(I)}} \nabla_\x f(\x; t) = \nabla_{t \cdots t \x} f(\x; t)$ denote the $I$-th derivative w.r.t. $t$ of the gradient. We indicate the inner product of vectors belonging to $\reals^n$ as $\langle \mathbold{v}, \mathbold{u} \rangle := \mathbold{v}^\top \mathbold{u}$, for all $\mathbold{v} \in \reals^n, \mathbold{u}\in \reals^n$, where $(\cdot)^\top$ means transpose. We denote by $\circ$ the composition operation. We use $\{ \x^\ell \}_{\ell \in \N}$ to indicate sequences of vectors indexed by non-negative integers, for which we define linear convergence as follows (see \cite{potra_qorder_1989} for details). 

\begin{definition}[Linear convergence]
Let $\{ \x^\ell \}_{\ell \in \N}$ and $\{ \y^\ell \}_{\ell \in \N}$ be sequences in $\R^n$, and consider the points $\x^*, \y^* \in \R^n$. We say that $\{ \x^\ell \}_{\ell \in \N}$ converges \emph{Q-linearly} to $\x^*$ if there exists $\lip \in (0,1)$ such that: $\norm{\x^{\ell+1} - \x^*} \leq \lip \norm{\x^\ell - \x^*}$, $\forall \ell \in \N$.

We say that $\{ \y^\ell \}_{\ell \in \N}$ converges \emph{R-linearly} to $\y^*$ if there exists a Q-linearly convergent sequence $\{ \x^\ell \}_{\ell \in \N}$ and $C > 0$ such that: $\norm{\y^\ell - \y^*} \leq C \norm{\x^\ell - \x^*}$, $\forall \ell \in \N$.
\end{definition}

\subsection{Convex analysis}
A function $f : \reals^n \to \reals$ is \emph{$\mu$-strongly convex}, $\mu > 0$, iff $f(\x) - \frac{\mu}{2} \norm{\x}^2$ is convex. It is said to be \emph{$L$-smooth} iff $\nabla f(\x)$ is $L$-Lipschitz continuous or, equivalently, iff $f(\x) - \frac{L}{2} \norm{\x}^2$ is concave. We denote by $\mathcal{S}_{\mu,L}(\reals^n)$ the class of twice differentiable, $\mu$-strongly convex, and $L$-smooth functions, and $\kappa := L / \mu$ will denote the condition number of such functions.
An extended real line function $f : \reals^n \to \reals \cup \{+\infty\}$ is \emph{closed} if its epigraph $\operatorname{epi}(f) = \{ (\x,a) \in \reals^{n+1} \ | \ x \in \operatorname{dom}(f), \ f(\x) \leq a \}$ is closed. It is \emph{proper} if it does not attain $-\infty$. We denote by $\Gamma_0(\reals^n)$ the class of closed, convex and proper functions $f : \reals^n \to \reals \cup \{+\infty\}$. Notice that functions in $\Gamma_0(\reals^n)$ need not be smooth.
Given a function $f \in \Gamma_0(\reals^n)$, we define its \emph{convex conjugate} as the function $f^* : \reals^n \to \reals \cup \{\infty\}$ such that
$
	f^*(\w) = \sup_{\x \in \reals^n} \left\{ \langle \w, \x \rangle - f(\x) \right\}
$. The convex conjugate of a function $f \in \Gamma_0(\reals^n)$ belongs to $\Gamma_0(\reals^n)$ as well, and if $f \in \mathcal{S}_{\mu,L}(\reals^n)$ then $f^* \in \mathcal{S}_{1/L,1/\mu}(\reals^n)$, \cite[Chapter~12.H]{rockafellar_variational_2009}. 

The \emph{subdifferential} of a convex function $f \in \Gamma_0(\reals^n)$ is defined as the set-valued operator $\partial f : \reals^n \rightrightarrows \reals^n$ such that:
$$
	\x \mapsto \!\left\{\! \z \in \reals^n\ |\ \forall \y \in \reals^n:\ \langle \y-\x, \z \rangle \!+\! f(\x) \leq f(\y) \!\right\},
$$ and we denote by $\tilde{\nabla} f(\x) \in \partial f(\x)$ the subgradients. The subdifferential of a convex function is \emph{monotone}, that is, for any $\x, \y \in \reals^n$: $0 \leq \langle \x - \y, \uv - \mathbold{v} \rangle$ where $\uv \in \partial g(\x), \mathbold{v} \in \partial g(\y)$.

\subsection{Operator theory}
We briefly review some notions and results in operator theory, and we refer to \cite{ryu_primer_2016,bauschke_convex_2017} for a thorough treatment.

\begin{definition}
An operator $\T : \reals^n \to \reals^n$ is:
\begin{itemize}
\item \emph{$\lip$-Lipschitz}, with $\lip>0$, iff $\norm{\T\x - \T\y} \leq \lip \norm{\x - \y}$ for any two $\x, \y \in \reals^n$; it is \emph{non-expansive} iff $\lip \in (0,1]$ and \emph{$\lip$-contractive} iff $\lip \in (0,1)$;
\item \emph{$\beta$-strongly monotone}, with $\beta > 0$, iff $\beta \norm{\x - \y}^2 \leq\langle \x - \y, \T\x - \T\y \rangle$, for any two $\x, \y \in \reals^n$.
\end{itemize}
\end{definition}

By using the Cauchy-Schwarz inequality, a $\beta$-strongly monotone operator can be shown to satisfy:
\begin{equation}\label{eq:strongly-monotone}
	\beta \norm{\x - \y} \leq \norm{\T\x - \T\y}, \quad \forall \x, \y \in \reals^n.
\end{equation}

\begin{definition}
Let $\T : \reals^n \to \reals^n$ be an operator, a point $\x^* \in \reals^n$ is a \emph{fixed point} for $\T$ iff $\x^* = \T \x^*$.
\end{definition}

By the Banach-Picard theorem \cite[Theorem~1.51]{bauschke_convex_2017}, contractive operators have a unique fixed point.

\subsection{Operator theory for convex optimization}\label{subsec:operators-for-optimization}
Operator theory can be employed to solve convex optimization problems; the main idea is to translate a minimization problem into the problem of finding the fixed points of a suitable operator. 

Let $f \in \mathcal{S}_{\mu,L}(\reals^n)$ and $g \in \Gamma_0(\reals^n)$ and consider the optimization problem
\begin{equation}\label{eq:generic-problem}
	\x^* = \argmin_{\x \in \reals^n} \left\{ f(\x) + g(\x) \right\}.
\end{equation}

Let $\T: \reals^p \to \reals^p$ and $\X: \reals^p \to \reals^n$  be two operators. Let $\T$ be $\lip$-contractive and such that its fixed point $\z^*$ yields the solution to~\cref{eq:generic-problem} through the operator $\x^* = \X \z^*$. Let the operator $\X$ be $\chi$-Lipschitz.

We employ then the \emph{Banach-Picard fixed point algorithm}, defined as the update:
\begin{equation}\label{eq:banach-picard}
	\z^{\ell+1} = \T \z^\ell, \quad \ell \in \N.
\end{equation}
By the contractiveness of $\T$, the Q-linear convergence to the fixed point is guaranteed~\cite[Theorem~1.51]{bauschke_convex_2017}:
\begin{equation}
	\norm{\z^{\ell+1} - \z^*} \leq \lip \norm{\z^\ell - \z^*} \leq \lip^{\ell+1} \norm{\z^0 - \z^*},
\end{equation}
as well as R-linear convergence of $\{ \x^\ell \}_{\ell \in \mathbb{N}}$ obtained through $\x^{\ell} = \X \z^{\ell}$ as
\begin{equation}\label{eq:r-linear-convergence}
	\norm{\x^{\ell+1} - \x^*} \leq \chi \lip^{\ell+1} \norm{\z^0 - \z^*}.
\end{equation}

The following lemma further characterizes the convergence in terms of $\x$.

\begin{lemma}\label{lem:contraction-results}
Let $\X$ be $\beta$-strongly monotone. Then, convergence of the sequence $\{ \x^\ell \}_{\ell \in \mathbb{N}}$ with $\x^\ell = \X \z^\ell$ is characterized by the following inequalities:
\begin{equation}\label{eq:iterated-contraction-total}
	\norm{\x^\ell - \x^*} \leq \zeta(\ell) \norm{\x^0 - \x^*}, \qquad \norm{\x^\ell - \x^0} \leq \xi(\ell) \norm{\x^0 - \x^*}
\end{equation}
where
\begin{equation}
	\zeta(\ell) := \left\{\begin{array}{lr} 1, & \textrm{for } \ell = 0, \\ \frac{\chi}{\beta} \lip^\ell, & \textrm{otherwise} \end{array}  \right.
\quad \text{and} \quad 
    \xi(\ell) := \left\{\begin{array}{lr} 0, & \textrm{for } \ell = 0, \\ 1 + \frac{\chi}{\beta}\lip^\ell, & \textrm{otherwise} \end{array}  \right. \, .
\end{equation}
\end{lemma}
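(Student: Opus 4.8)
The plan is to chain together the three ingredients already at our disposal—the $\lip$-contractivity of $\T$, the $\chi$-Lipschitz continuity of $\X$, and the newly assumed $\beta$-strong monotonicity of $\X$—in order to convert the known $\z$-space convergence rate into the two claimed $\x$-space bounds. First I would dispose of the base case $\ell = 0$, which is immediate: $\norm{\x^0 - \x^*} \leq 1 \cdot \norm{\x^0 - \x^*}$ and $\norm{\x^0 - \x^0} = 0$, matching $\zeta(0) = 1$ and $\xi(0) = 0$.

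The crucial step for $\ell \geq 1$ is to observe that strong monotonicity of $\X$ furnishes a \emph{reverse} Lipschitz estimate. Applying~\cref{eq:strongly-monotone} to the operator $\X$ at the points $\z^0$ and $\z^*$ gives $\beta \norm{\z^0 - \z^*} \leq \norm{\X\z^0 - \X\z^*} = \norm{\x^0 - \x^*}$, so that $\norm{\z^0 - \z^*} \leq \beta^{-1}\norm{\x^0 - \x^*}$. This is the key to the whole argument, since it expresses the initial $\z$-space error in terms of the initial $\x$-space error, which is precisely the quantity appearing on the right-hand side of both claimed inequalities.

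With this in hand I would combine the $\chi$-Lipschitz continuity of $\X$ with the Q-linear contraction $\norm{\z^\ell - \z^*} \leq \lip^\ell \norm{\z^0 - \z^*}$ (the iterated Banach--Picard estimate) to obtain
\[
	\norm{\x^\ell - \x^*} = \norm{\X\z^\ell - \X\z^*} \leq \chi \norm{\z^\ell - \z^*} \leq \chi \lip^\ell \norm{\z^0 - \z^*} \leq \frac{\chi}{\beta}\lip^\ell \norm{\x^0 - \x^*},
\]
which is exactly the first bound with $\zeta(\ell) = \tfrac{\chi}{\beta}\lip^\ell$. The second bound then follows by the triangle inequality, $\norm{\x^\ell - \x^0} \leq \norm{\x^\ell - \x^*} + \norm{\x^* - \x^0} \leq \bigl(1 + \tfrac{\chi}{\beta}\lip^\ell\bigr)\norm{\x^0 - \x^*}$, yielding $\xi(\ell) = 1 + \tfrac{\chi}{\beta}\lip^\ell$.

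The argument is essentially bookkeeping, so I do not anticipate a genuine obstacle. The only point requiring care is the orientation of the strong-monotonicity inequality: it must be invoked as a \emph{lower} bound on $\norm{\X\z^0 - \X\z^*}$ so as to control $\norm{\z^0 - \z^*}$ from above, rather than in the more familiar upper-bounding role of a Lipschitz condition.
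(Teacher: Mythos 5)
Your proposal is correct and follows essentially the same route as the paper's proof: the base case $\ell=0$ is handled directly, strong monotonicity of $\X$ is used in exactly the same way to bound $\norm{\z^0-\z^*}$ by $\beta^{-1}\norm{\x^0-\x^*}$, this is combined with the R-linear estimate~\cref{eq:r-linear-convergence} to obtain $\zeta(\ell)$, and the triangle inequality yields $\xi(\ell)$. No gaps.
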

\begin{proof}
In the case in which $\ell = 0$, then we have $\norm{\x^\ell - \x^*} = \norm{\x^0 - \x^*}$ and $\norm{\x^\ell - \x^0} = \norm{\x^0 - \x^0} = 0$, which give the first cases in the definitions of $\zeta(\ell)$ and $\xi(\ell)$.
If $\ell > 0$, then by $\beta$-strong monotonicity of $\X$, Eq.~\eqref{eq:strongly-monotone}, we have
\begin{equation}\label{eq:z-to-x}
	\norm{\z^0 - \z^*} \leq \frac{1}{\beta} \norm{\X \z^0 - \X \z^*} = \frac{1}{\beta} \norm{\x^0 - \x^*}.
\end{equation}
Combining~\cref{eq:r-linear-convergence} with~\cref{eq:z-to-x} then yields the second case in the definition of $\zeta(\ell)$. Moreover, using the triangle inequality we have:
$
	\norm{\x^\ell - \x^0} \leq \norm{\x^\ell - \x^*} + \norm{\x^0 - \x^*},
$
and the second case in the definition of $\xi(\ell)$ follows by~\cref{eq:r-linear-convergence} and~\cref{eq:z-to-x}.
\end{proof}

Examples of $\T$ and $\X$ operators for primal problems are reported in Example~\ref{ex:primal-solvers} below, while Example~\ref{ex:pc-admm} discusses the dual solver Alternating direction method of multipliers (ADMM).
We now give a formal definition of operator theoretical solver, which will be needed for our developments. 

\begin{definition}[Operator theoretical solver]\label{def:operator-th-solver}
Let $\T : \reals^p \to \reals^p$ and $\X : \reals^p \to \reals^n$ be two operators, respectively, $\lip$-contractive for $\T$ and $\chi$-Lipschitz and $\beta$-strongly monotone for $\X$, such that the solution $\x^*$ of~\cref{eq:generic-problem} can be computed as $\x^* = \X \z^*$ with $\z^*$ being the fixed point of $\T$.
Suppose that a recursive method, \emph{e.g.} the Banach-Picard in~\cref{eq:banach-picard}, is available to compute the fixed point $\z^*$. Then we call this recursive method an \emph{operator theoretical solver} for problem~\cref{eq:generic-problem}, and call each recursive update of the method a \emph{step} of the solver. We also use the short-hand notation $\Op(\lip, \chi, \beta)$ to indicate such a solver, for which the contraction rates in~\cref{lem:contraction-results} are valid. 
\end{definition}

\begin{example}[Operator theoretical solvers]\label{ex:primal-solvers}
Problem~\cref{eq:generic-problem} can be solved by applying one of the following \emph{splitting algorithms}:
\begin{itemize}[leftmargin=*]
	\item \emph{Forward-backward splitting (FBS)} (or \emph{proximal gradient method}): we choose $\T = \prox_{\rho g} \circ (\I - \rho \nabla_{\x} f)$, which is contractive for $\rho < 2 / L$ and has $\X = \I$; the algorithm is characterized by \cite{taylor_convex_2017}:
	\begin{equation}\label{eq:fbs}
    	\y^{\ell} = \x^\ell - \rho \nabla_{\x} f(\x^\ell),
    	\qquad \x^{\ell+1} = \prox_{\rho g}(\y^\ell),
    	\quad \ell \in \mathbb{N}.
	\end{equation}
	\item \emph{Peaceman-Rachford splitting (PRS)}: we choose $\T = \refl_{\rho g} \circ \refl_{\rho f}$, which is contractive for any $\rho > 0$ and has $\X = \prox_{\rho f}$; the algorithm's updates are \cite{giselsson_linear_2017}:
	\begin{equation}\label{eq:prs}
		\x^\ell = \prox_{\rho f}(\z^\ell),
		\qquad\y^\ell = \prox_{\rho g}(2\x^\ell - \z^\ell),
		\qquad\z^{\ell+1} = \z^\ell + (\y^\ell - \x^\ell)
	\end{equation}
	and, from the fixed point $\z^*$ of $\T$ we compute the solution $\x^*$ through $\X = \prox_{\rho f}$.
\end{itemize}
If problem~\cref{eq:generic-problem} does not have a non-smooth term ($g(\x) = 0$ for all $\x \in \reals^n$), then FBS and PRS reduce to the \textit{gradient descent method} \cite{taylor_convex_2017} and \textit{proximal point algorithm} (PPA) \cite{rockafellar_monotone_1976}, respectively.
\end{example}


\section{Prediction-Correction Algorithms}\label{sec:PC-framework}
We start by describing in this section the proposed prediction-correction methodology, referring to problem~\cref{eq:ti-general-problem-1}:
\begin{equation}\label{eq:tv-problem}
	\x_k^* = \argmin_{\x \in \reals^n} \left\{ f(\x;t_k) + g(\x) \right\}, \quad k \in \N
\end{equation}
where hereafter $\x_k^* := \x^*(t_k)$. Notice that the \textit{size} $n$ of the problem does not change over time, only the cost function $f$. As said, problem~\cref{eq:tv-problem} can model a wide range of both constrained and unconstrained optimization problems, in which a smooth term $f$ is (possibly) summed to a non-smooth term $g$. For example, we may have that $g$ is the indicator function of a constraint set, or a non-smooth function promoting some structural properties (such as an $\ell_1$ norm enforcing sparsity).

\begin{remark}[Explicit \emph{v.} implicit time-dependence]
Notice that in many data-driven applications, the costs would not depend explicitly on time; rather, they would depend on time-varying data, and hence only implicitly on time. Nonetheless, the model we employ is general enough to account also for implicit time-dependence.
\end{remark}

\subsection{Methodology}
Suppose that an operator theoretical solver for problem~\cref{eq:tv-problem} is available. The prediction-correction scheme is characterized by the following two steps:
\begin{itemize}[leftmargin=*]
	\item \emph{Prediction}: at time $t_k$, we approximate the as yet unobserved cost $f(\x;t_{k+1})$ using the past observations; let $\hat{f}_{k+1}(\x)$ be such approximation, then we solve the problem
	\begin{equation}\label{eq:prediction-problem}
		\hat{\x}_{k+1}^* = \argmin_{\x \in \reals^n} \left\{ \hat{f}_{k+1}(\x) + g(\x) \right\}
	\end{equation}
	with initial condition $\x_k$, which yields the prediction $\hat{\x}_{k+1}^*$. In practice, it is possible to compute only an approximation of $\hat{\x}_{k+1}^*$, denoted by $\hat{\x}_{k+1}$, by applying $\Np$ steps of the solver.
	
		\item \emph{Correction}: when, at time $t_{k+1}$, the cost $f_{k+1}(\x) := f(\x;t_{k+1})$ is made available, we can correct the prediction computed at the previous step by solving:
	\begin{equation}\label{eq:correction-problem}
		\x_{k+1}^* = \argmin_{\x \in \reals^n} \left\{ f_{k+1}(\x) + g(\x) \right\}
	\end{equation}
	with initial condition equal to $\hat{\x}_{k+1}$. We will denote by $\x_{k+1}$ the (possibly approximate) correction computed by applying $\Nc$ steps of the solver.
\end{itemize}

\begin{wrapfigure}{r}{7.5cm}
\centering
\footnotesize
	\begin{tikzpicture}[scale=0.65]
	
		\draw (-3,0) -- (-2.5,0);
		\draw[dashed] (-2.5,0) -- (-1,0);
		\draw[->] (-1,0) -- (8,0);
		\draw (-3,-0.15) -- (-3,0.15) node[above] {$t_0$};
		\draw (1.8,-0.15) -- (1.8,0.15) node[above] {$t_{k}$};
		\draw (6,-0.15) -- (6,0.15) node[above] {$t_{k+1}$};
		
		\node[align=center] at (-2,-1) {\textsf{available}\\\textsf{information}};
		\node (tk) at (1.8,-1) {$\x_k$, $\{ f_\ell(\cdot) \}_{\ell \leq k}$};
		\node (tk1) at (6,-1) {$f_{k+1}(\cdot)$};
		
		\node[align=center] (pr) at (1.8,-2.5) {\textsf{prediction}\\$\hat{\x}_{k+1}$};
		\node[align=center] (co) at (6,-4.15) {\textsf{corrected}\\\textsf{prediction}\\$\x_{k+1}$};
		\node (plus) at (6,-2.5) {$+$};
		
		\path[->] (tk) edge (pr)
			  (pr) edge (plus)
			  (tk1) edge (plus)
			  (plus) edge (co);
			
	\end{tikzpicture}

\caption{The prediction-correction scheme.}
\label{fig:prediction-correction-scheme}
\vspace{-15pt}
\end{wrapfigure}

\cref{fig:prediction-correction-scheme} depicts the flow of the prediction-correction scheme, in which information observed up to time $t_k$ is used to compute the prediction $\hat{\x}_{k+1}$. In turn, the prediction serves as a warm-starting condition for the correction problem, characterized by the cost observed at time $t_{k+1}$.

\subsubsection{Solvers}
As described above, the proposed methodology requires that an operator theoretical solver for the prediction and correction steps be available. In particular, there are $\lip$-contractive operators $\hat{\T}_{k+1}, \T_{k+1} : \reals^p \to \reals^p$ with fixed points $\hat{\z}_{k+1}^*, \z_{k+1}^*$, and $\chi$-Lipschitz, $\beta$-strongly monotone operators $\hat{\X}_{k+1}, \X_{k+1} : \R^p \to \R^n$, such that
$$
	\hat{\x}_{k+1}^* = \hat{\X}_{k+1} \hat{\z}_{k+1}^* \quad \text{and} \quad \x_{k+1}^* = \X_{k+1} \z_{k+1}^*.
$$
For simplicity, we assume that the convergence rate of the prediction and correction solvers are the same, and we denote them by $\Op(\lip, \chi, \beta)$. Therefore the contraction functions $\zeta$ and $\xi$ in~\cref{lem:contraction-results} are the same in both cases.

There is a broad range of solvers that can be used within the proposed methodology, depending on the structure of problem~\cref{eq:tv-problem}. For example, if $g \equiv 0$, then \emph{gradient method} and \emph{proximal point algorithms} are suitable solvers, while if $g \not\equiv 0$ then \emph{forward-backward}\footnote{Also called \emph{proximal gradient method}.} and \emph{Peaceman-Rachford} splitting can be used.

\subsection{Prediction methods}
The most straightforward prediction method is the choice $\hat{f}_{k+1}(\x) = f_k(\x)$ which simply employs the last observed cost as a prediction of the next. However, as we will discuss in the following, using a more sophisticated prediction strategy can lead to better performance.

In particular, we look at extrapolation-based prediction. First of all we briefly review a numerical technique for  polynomial interpolation \cite{quarteroni_numerical_2007}, which we then leverage to design a novel prediction strategy.

\subsubsection{Polynomial interpolation}
Let $\varphi : \reals \to \reals$ be a function that we want to interpolate from the pairs $\{ (t_i, \varphi_i) \}_{i=1}^I$ where $\varphi_i := \varphi(t_i)$, and with $t_i \neq t_j$ for any $i \neq j$. The interpolated function is then defined as \cite[Theorem~8.1]{quarteroni_numerical_2007}:
\begin{equation}\label{eq:interpolation}
	\hat{\varphi}(t) = \sum_{i=1}^I \varphi_i \ell_i(t), 
	\qquad \text{with} \qquad \ell_i(t) = \prod_{\substack{1 \leq j \leq I \\ j \neq i}} \frac{t - t_j}{t_i - t_j}.
\end{equation}

The interpolation error can be characterized by \cite[Theorem~8.2]{quarteroni_numerical_2007}:
\begin{equation}\label{eq:interpolation-error}
	\varphi(t) - \hat{\varphi}(t) = \frac{\varphi^{(I)}(v)}{I!} \omega_I(t) \qquad \text{with} \qquad \omega_I(t) = \prod_{i=1}^I (t - t_i)
\end{equation}
and where $v$ is a scalar in the smallest interval that contains $t$ and $\{ t_i \}_{i=1}^I$.

Since in the following we are interested in evaluating the interpolated function at a point $t$ that lies outside the interval $[t_1, t_I]$, we will refer to the resulting function as \emph{extrapolation}. 

\subsubsection{Extrapolation-based prediction}\label{subsec:extrapolation}
Let us now apply the polynomial interpolation technique~\cref{eq:interpolation} to the function $f(\x;t) : \reals^n \times \reals_+ \to \reals^n$ w.r.t. the scalar variable $t \in \reals_+$. In particular, we compute the predicted function $\hat{f}_{k+1}$ from the set of past functions $\{ f_i(\x) \}_{i=k-I+1}^k$. Since the sampling times are multiples of $\Ts$, it is easy to see that the coefficients in~\cref{eq:interpolation} become:
$$
	\ell_i(t_{k+1}) = \prod_{\substack{k-I+1 \leq j \leq k \\ j \neq i}} \frac{t_{k+1} - t_j}{t_i - t_j} = \prod_{\substack{1 \leq h \leq I \\ h \neq i - (k+1)}} \frac{h}{i - (k+1) - h} = (-1)^{i-(k+1)} \binom{I}{i - (k+1)}
$$
and letting $\ell_i := \ell_i(t_{k+1})$ the prediction is thus given by
\begin{equation}\label{eq:extrapolation-prediction}
	\hat{f}_{k+1}(\x) = \sum_{i=k-I+1}^k \ell_i f_{i}(\x), \quad \forall \x \in \reals^n.
\end{equation}
In general, however, the predicted cost $\hat{f}_{k+1}$ may not be strongly convex -- as a matter of fact, it can even fail to be convex. 

However, and crucially, since for our $f(\x;t)$ the Hessian is time-independent, then $\nabla_{\x\x} f(\x;t) = \nabla_{\x\x} f(\x)$ for any $(\x;t) \in \reals^n \times \reals_+$, which implies that 
\begin{equation}
\nabla_{\x\x} \hat{f}_{k+1}(\x) = (\sum_{i = k-I+1}^k \ell_i) \nabla_{\x\x} f(\x) = \nabla_{\x\x} f(\x),
\end{equation}
having used the fact that $\sum_{i = k-I+1}^k \ell_i = 1$. Therefore $\hat{f}_{k+1}$ inherits the same strong convexity and smoothness properties of the original cost. 

This property is inherent to our regularized least-squares structure and typical in signal processing, and very useful for good prediction.  

\begin{remark}[Alternative prediction strategies]\label{rem:taylor-prediction}
We mention here two alternative prediction strategies that have been proposed in the literature. The simpler one, widely used in the context of \emph{online learning} \cite{shalev_online_2011}, is the choice of $\hat{f}_{k+1} = f_k$. This strategy, hereafter called ``one-step-back'' prediction, is particularly suited to adversarial environments, where the future cost $f_{k+1}$ is chosen by the adversary and the best decision we can make is based on $f_k$.
Alternatively, under the assumption that the gradient of $f_k$ is differentiable in time we can choose the \emph{Taylor expansion}-based prediction $\nabla_\x \hat{f}_{k+1}(\x) = \nabla_\x f_k(\x_k) + \Ts \nabla_{t\x} f_k(\x_k) + \nabla_{\x\x} f_k(\x_k) (\x - \x_k)$ \cite{simonetto_class_2016}.
\end{remark}

\smallskip

\begin{remark}[Computational comparison]\label{rem:computational-comparison}
From \cref{rem:taylor-prediction}, the Taylor-based prediction is $\nabla_\x \hat{f}_{k+1}(\x) = \nabla_\x f_k(\x_k) + \Ts \nabla_{t\x} f_k(\x_k) + \nabla_{\x\x} f_k(\x_k) (\x - \x_k)$. This means that to compute it, we need to evaluate the gradient, the Hessian, and the time-derivative of the gradient. On the other hand, the extrapolation-based prediction only requires the computation of gradients from the $I$ past costs that are stored -- this means that we only need access to an \textit{oracle} of the gradients, and building a prediction has a much lower cost.
Finally, we remark that the computationally cheaper approach is the one-step-back prediction $\hat{f}_{k+1} = f_k$, which requires accessing the oracle of only one past cost. Nonetheless, as the theoretical and numerical results will show, the more refined extrapolation-based prediction achieves much smaller tracking error than using $\hat{f}_{k+1} = f_k$, thus justifying its higher computational burden.
\end{remark}


\section{Primal Online Algorithms}\label{sec:convergence}
We are now ready to present our main convergence results. We start by formally stating the required assumptions, and then we provide bounds to the tracking error achieved by the proposed prediction-correction method.

\subsection{Assumptions}\label{subsec:assumptions}

\begin{assumption}\label{as:problem-properties}
\emph{(i)} The cost function $f:\reals^n \times \reals_+ \to \reals$ belongs to $\mathcal{S}_{\mu,L}(\R^n)$ uniformly in $t$. \emph{(ii)} The function $g:\reals^n \to \reals \cup \{+\infty\}$ either belongs to $\Gamma_0(\reals^n)$, or $g(\cdot) \equiv 0$. \emph{(iii)} The solution to~\cref{eq:tv-problem} is finite for any $k \in \N$.
\end{assumption}

\cref{as:problem-properties}\emph{(i)} guarantees that problem~\eqref{eq:tv-problem} is strongly convex and has a unique solution for each time instance. Uniqueness of the solution implies that the solution trajectory is also unique.

\begin{assumption}\label{as:time-derivative-f}
The gradient of function $f$ has bounded time derivative, that is, there exists $C_0 > 0$ such that $\norm{\nabla_{t\x} f(\x;t)} \leq C_0$ for any $\x \in \R^n$, $t \in \R_+$.
\end{assumption}

By imposing \cref{as:time-derivative-f} we ensure that the solution trajectory is Lipschitz in time, as we will see, and therefore prediction-type methods would work well.

\begin{assumption}\label{as:higher-derivatives-f}
The function $f$ has a static Hessian, that is, $\nabla_{\x\x} f(\x;t) = \nabla_{\x\x} f(\x)$ for any $(\x;t) \in \reals^n \times \reals_+$. For the chosen extrapolation order $I \in \N$, $I \geq 2$, there exists $C(I) > 0$ such that:
\begin{equation}\label{eq:bounded-I-order-derivatives}
	\norm{\frac{\partial^{(I)}}{\partial t^{(I)}} \nabla_\x f(\x_{k+1}^*; \tau)} \leq C(I), \ \tau \in [t_{k+1-I}, t_{k+1}].
\end{equation}
\end{assumption}

As mentioned in \cref{subsec:extrapolation} a static Hessian guarantees that the extrapolation-based prediction is strongly convex. The bound on the $I$-th time-derivative of the gradient will instead serve to quantify the quality of the prediction, by comparing $\hat{\x}_{k+1}^*$ and the true optimum $\x_{k+1}^*$.

\subsection{Convergence}
We start by presenting a general bound (meta)-proposition, which can be used to derive the asymptotic error for a large variety of prediction strategies, and it is of independent interest. 

\begin{proposition}[General error bound]\label{pr:generic-error-bound}
Let \cref{as:problem-properties} hold and consider any prediction strategy that uses the same functional class as the original problem~\eqref{eq:tv-problem}. Let $\sigma_k, \tau_k \in [0,+\infty)$ be such that for any $k \in \N$:
\begin{equation}\label{eq:sigma-tau}
	\norm{\x_{k+1}^* - \x_k^*} \leq \sigma_k \quad \text{and} \quad \norm{\hat{\x}_{k+1}^* - \x_{k+1}^*} \leq \tau_k.
\end{equation}
Then the error incurred by a prediction-correction method that uses the solver $\Op(\lip, \chi, \beta)$ is upper bounded by:
\begin{equation}\label{eq:overall-error}
	\norm{\x_{k+1} - \x_{k+1}^*} \leq \zeta(\Nc) \Big(\zeta(\Np) \norm{\x_k - \x_k^*} + \zeta(\Np) \sigma_k + \xi(\Np) \tau_k \Big),
\end{equation}
with functions $\zeta$ and $\xi$ defined in~\cref{lem:contraction-results}.
\end{proposition}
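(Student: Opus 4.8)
The plan is to apply the contraction estimates of \cref{lem:contraction-results} to the correction and prediction solvers in turn, and then stitch the pieces together with triangle inequalities anchored at the three optimizers $\x_k^*$, $\x_{k+1}^*$, and $\hat{\x}_{k+1}^*$. The correction step is the easy half: since $\x_{k+1}$ is produced by $\Nc$ steps of the solver $\Op(\lip,\chi,\beta)$ applied to the correction problem~\cref{eq:correction-problem}---whose optimizer is $\x_{k+1}^*$---started from the warm start $\hat{\x}_{k+1}$, the first inequality in~\cref{eq:iterated-contraction-total} gives immediately
\begin{equation*}
	\norm{\x_{k+1} - \x_{k+1}^*} \leq \zeta(\Nc) \norm{\hat{\x}_{k+1} - \x_{k+1}^*}.
\end{equation*}
This holds for every $\Nc \geq 0$ (including $\Nc = 0$, where $\zeta(0) = 1$), so it remains only to bound $\norm{\hat{\x}_{k+1} - \x_{k+1}^*}$ through the prediction step.

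For the prediction I would again invoke \cref{lem:contraction-results}, now for the $\Np$ solver steps on the prediction problem~\cref{eq:prediction-problem} (optimizer $\hat{\x}_{k+1}^*$) started from $\x_k$, obtaining $\norm{\hat{\x}_{k+1} - \hat{\x}_{k+1}^*} \leq \zeta(\Np) \norm{\x_k - \hat{\x}_{k+1}^*}$. A triangle inequality then brings in the correction optimizer and invokes the definition of $\tau_k$ from~\cref{eq:sigma-tau}:
\begin{equation*}
	\norm{\hat{\x}_{k+1} - \x_{k+1}^*} \leq \zeta(\Np) \norm{\x_k - \hat{\x}_{k+1}^*} + \norm{\hat{\x}_{k+1}^* - \x_{k+1}^*} \leq \zeta(\Np) \norm{\x_k - \hat{\x}_{k+1}^*} + \tau_k .
\end{equation*}
Finally I would expand $\norm{\x_k - \hat{\x}_{k+1}^*}$ by inserting $\x_k^*$ and $\x_{k+1}^*$ as intermediate points, so that the three resulting terms are exactly $\norm{\x_k - \x_k^*}$, $\norm{\x_k^* - \x_{k+1}^*} \leq \sigma_k$, and $\norm{\x_{k+1}^* - \hat{\x}_{k+1}^*} \leq \tau_k$. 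Collecting everything produces the coefficient $\zeta(\Np)+1$ in front of $\tau_k$, and the argument closes via the key algebraic identity $\xi(\Np) = \zeta(\Np) + 1$, valid for $\Np \geq 1$ directly from the piecewise definitions in \cref{lem:contraction-results}; multiplying through by $\zeta(\Nc)$ then reproduces~\cref{eq:overall-error}.

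The one genuine subtlety---and the step I expect to be the main obstacle---is the boundary case $\Np = 0$, where the identity $\xi(\Np) = \zeta(\Np)+1$ breaks down ($\xi(0)=0$ while $\zeta(0)+1 = 2$), so the chain above would overcount the $\tau_k$ contribution by a factor of two. The resolution is that when $\Np = 0$ no prediction is actually performed, hence $\hat{\x}_{k+1} = \x_k$ identically; one should then \emph{not} route the triangle inequality through the now-irrelevant prediction optimizer $\hat{\x}_{k+1}^*$, but bound $\norm{\hat{\x}_{k+1} - \x_{k+1}^*} = \norm{\x_k - \x_{k+1}^*} \leq \norm{\x_k - \x_k^*} + \sigma_k$ directly, which matches~\cref{eq:overall-error} with $\zeta(0)=1$, $\xi(0)=0$, and no $\tau_k$ term. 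I would therefore split the proof into the two cases $\Np = 0$ and $\Np \geq 1$; this case distinction is precisely what makes the bound collapse to the correction-only estimate when prediction is switched off, one of the stated improvements over prior work. Note finally that the horizon condition $\zeta(\Nc)\zeta(\Np) < 1$ plays no role in establishing the one-step bound~\cref{eq:overall-error} itself; it would only be needed afterwards when iterating this recursion into an asymptotic error estimate.
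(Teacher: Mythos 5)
Your proposal is correct and follows essentially the same route as the paper's own proof: the correction contraction $\norm{\x_{k+1} - \x_{k+1}^*} \leq \zeta(\Nc)\norm{\hat{\x}_{k+1} - \x_{k+1}^*}$, the prediction contraction combined with triangle inequalities through $\hat{\x}_{k+1}^*$, $\x_{k+1}^*$, and $\x_k^*$, the identity $\xi(\Np) = 1 + \zeta(\Np)$ for $\Np > 0$, and the separate treatment of $\Np = 0$ (where $\hat{\x}_{k+1} = \x_k$, $\zeta(0)=1$, $\xi(0)=0$) are all exactly the steps in \cref{proof:pr:generic-error-bound}. Your closing observations---that the case split is what makes the bound collapse to the correction-only estimate, and that the horizon condition is not needed for the one-step recursion itself but only for iterating it---are also consistent with how the paper uses this result.
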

\begin{proof}
See \cref{proof:pr:generic-error-bound}.
\end{proof}

We are now ready to bound $\sigma_k$ and $\tau_k$ for our prediction strategy. First, we present a useful lemma that, employing the assumptions in \cref{subsec:assumptions}, bounds the distance between consecutive points in the optimal trajectory $\{ \x_k^* \}_{k \in \N}$.

\begin{lemma}\label{lem:distance-optima}
Let \cref{as:problem-properties,as:time-derivative-f} hold, then the distance between the optimizers of problems~\eqref{eq:tv-problem} at $t_k$ and $t_{k+1}$ is bounded by:
\begin{equation}
	\norm{\x_{k+1}^* - \x_k^*} \leq C_0 \Ts /\mu.
\end{equation}
\end{lemma}
\begin{proof}
See \cref{proof:lem:distance-optima}.
\end{proof}

The second step is to provide a bound on the distance between the optimizer of the prediction problem and the actual optimizer $\x_{k+1}^*$, i.e., a $\tau_k$ for our prediction strategy.

\begin{lemma}\label{lem:extrapolation-error}
Let \cref{as:problem-properties,as:higher-derivatives-f} hold. Using the extrapolation-based prediction~\cref{eq:extrapolation-prediction} of order $I$ for $f$ yields the following prediction error:
\begin{equation}\label{eq:approximation-error-interp}
	\norm{\hat{\x}_{k+1}^* - \x_{k+1}^*} \leq C(I) \Ts^I / \mu.
\end{equation}
\end{lemma}
\begin{proof}
See \cref{proof:lem:extrapolation-error}.
\end{proof}

With these lemmas in place we can now characterize the convergence when the extrapolation-based prediction~\cref{eq:extrapolation-prediction} is employed.

\begin{theorem}\label{th:extrapolation-convergence}
Consider Problem~\eqref{eq:tv-problem}. Consider the prediction-correction algorithm with the extrapolation-based prediction strategy~\cref{eq:extrapolation-prediction} of order $I \in \N$, $I \geq 2$, for $f$. Let \cref{as:problem-properties,as:time-derivative-f,as:higher-derivatives-f} hold. Consider the operator theoretic solver $\Op(\lip, \beta, \chi)$ to solve both the prediction and correction problems with contraction rates $\zeta$ and $\xi$ given in~\cref{lem:contraction-results}. Choose the prediction and correction horizons $\Np$ and $\Nc$ such that
$$
	\zeta(\Nc) \zeta(\Np) < 1.
$$
Then the trajectory $\{\x_k\}_{k \in \mathbb{N}}$ generated by the prediction-correction algorithm converges Q-linearly with rate $\zeta(\Nc) \zeta(\Np)$ to a neighborhood of the optimal trajectory $\{\x_k^*\}_{k \in \mathbb{N}}$, whose radius is upper bounded as
\begin{equation}\label{eq:asymptotic-error-bound}
	\limsup_{k \to \infty} \norm{\x_k - \x_k^*} = \frac{\zeta(\Nc)}{\mu} \frac{[ \zeta(\Np) C_0 \Ts + C(I) \xi(\Np) \Ts^I]}{1 - \zeta(\Nc) \zeta(\Np)}.
\end{equation}
\end{theorem}
\begin{proof}
See \cref{proof:th:extrapolation-convergence}.
\end{proof}

\begin{remark}[$\Nc$ and $\Np$ choice]
Notice that if the operator $\X$ converting between $\z$ and the primal variable $\x$ is the identity (which is the case \emph{e.g.} for gradient and proximal gradient methods), then $\zeta(\Nc) \zeta(\Np) < 1$ is automatically satisfied whenever at least one of $\Nc$ or $\Np$ is non-zero.
\end{remark}


\section{Dual Online Algorithms}\label{sec:dual}
We now propose a dual version to the prediction-correction methodology, that allows us to solve linearly constrained online problems. We apply the extrapolation-based prediction to this class of problems and study the convergence of the resulting method.

\subsection{Problem formulation}\label{subsec:dual-problem-formulation}
We are interested in solving the following online convex optimization problem with linear constraints, cf. \cref{eq:tv-general-problem-2}:
\begin{equation}\label{eq:continuous-time-admm-problem}
	\x^*(t), \y^*(t) = \argmin_{\x \in \reals^n, \y \in \reals^m} \, f(\x; t) + \gh(\y), \quad \text{s.t.} \ \Am\x + \Bm\y = \cv,
\end{equation}
where $\Am \in \R^{p \times n}$, $\Bm \in \R^{p \times m}$ and $\cv \in \R^{p}$. The following assumption will hold throughout this section, and we will further use \cref{as:time-derivative-f,as:higher-derivatives-f} for $f$.

\begin{assumption}\label{as:admm-problem}
\emph{(i)} The cost function $f$ belongs to $\mathcal{S}_{\mu,L}(\reals^n)$ uniformly in time and satisfies \cref{as:time-derivative-f}. \emph{(ii)} The cost $\gh$ either belongs to $\Gamma_0(\reals^m)$ or $\gh(\cdot) \equiv 0$ with $\Bm = \zero$. \emph{(iii)} The matrix $\Am \in \R^{p \times n}$ is full row rank and the vector $\cv$ can be written as the sum of two vectors $\cv' \in \im(\Am)$ and $\cv'' \in \im(\Bm)$\footnote{This assumption ensures that the problem does indeed have a solution; otherwise, it would not be possible to satisfy the linear constraints.}.
\end{assumption}

The Fenchel dual of~\cref{eq:continuous-time-admm-problem} is
\begin{equation}\label{eq:continuous-time-dual-problem}
	\w^*(t) = \argmin_{\w \in \reals^p} \left\{ d^f(\w;t) + d^{\gh}(\w) \right\}
\end{equation}
where
$
	d^f(\w;t) = f^*(\Am^\top \w; t) - \langle \w, \cv \rangle$ and $ d^{\gh}(\w) = {\gh}^*(\Bm^\top \w).
$
Problem~\cref{eq:continuous-time-dual-problem} conforms to the class of problems that can be solved with the prediction-correction splitting methods of \cref{sec:PC-framework}. Indeed, by \cref{as:admm-problem} we can see that $d^f \in \mathcal{S}_{\bar{\mu},\bar{L}}(\reals^p)$, with $\bar{\mu} := \lambda_\mathrm{m}(\Am\Am^\top) / L$ and $\bar{L} := \lambda_\mathrm{M}(\Am\Am^\top) / \mu$ \cite[Prop.~4]{giselsson_linear_2017}; and $d^{\gh} \in \Gamma_0(\reals^p)$ \cite[Cor.~13.38]{bauschke_convex_2017}. We further know that the gradient of $d^f$ is characterized by \cite{giselsson_metric_2015}:
\begin{equation}\label{eq:dual-gradient}
	\nabla_\w d^f(\w;t) = \Am \bar{\x}(\w,t) - \cv, \quad \bar{\x}(\w,t) := \argmin_\x \left\{ f(\x;t) - \langle \Am^\top \w, \x \rangle \right\}.
\end{equation}

Finally, assuming that there exists $C_0 \geq 0$ such that $\norm{\nabla_{t\x} f(\x;t)} \leq C_0$, then we can prove that also the gradient of the dual cost $d^f$ has bounded rate of change.

\begin{lemma}\label{lem:C0-dual-gradient}
Let \cref{as:admm-problem} hold for the primal problem~\cref{eq:continuous-time-admm-problem}. Then $d^f$ is such that, for any $\x \in \reals^n$ and $t \in \reals_+$:
$$
	\norm{\nabla_{t\w} d^f(\w;t)} \leq \norm{\Am} C_0 / \mu =: \bar{C}_0.
$$
\end{lemma}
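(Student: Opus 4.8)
The plan is to bound the norm of the expression for $\nabla_{t\w} d^f(\w;t)$ that was just computed in \cref{pr:derivatives-dual}. From that proposition, we have the explicit formula
\begin{equation*}
	\nabla_{t\w} d^f(\w;t) = - \Am \nabla_{\x\x} f(\bar{\x}(\w,t);t)^{-1} \nabla_{t\x} f(\bar{\x}(\w,t);t),
\end{equation*}
so the entire task reduces to taking norms of a product of three factors and applying submultiplicativity of the induced operator norm, namely $\norm{\Am \Hm \mathbold{v}} \leq \norm{\Am}\,\norm{\Hm}\,\norm{\mathbold{v}}$.

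First I would bound each factor separately. The factor $\norm{\Am}$ is kept as is. For the middle factor, $\Hm := \nabla_{\x\x} f(\bar{\x}(\w,t);t)^{-1}$ is the inverse of the Hessian of $f$; since $f \in \mathcal{S}_{\mu,L}(\reals^n)$ uniformly in time by \cref{as:admm-problem}, its Hessian has smallest eigenvalue at least $\mu$, so the inverse Hessian has largest eigenvalue at most $1/\mu$, giving $\norm{\Hm} = \norm{\nabla_{\x\x} f(\bar{\x}(\w,t);t)^{-1}} \leq 1/\mu$. For the last factor, \cref{as:time-derivative-f} directly gives $\norm{\nabla_{t\x} f(\x;t)} \leq C_0$ for every $\x$, in particular at $\x = \bar{\x}(\w,t)$. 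Multiplying the three bounds yields
\begin{equation*}
	\norm{\nabla_{t\w} d^f(\w;t)} \leq \norm{\Am}\, \frac{1}{\mu}\, C_0 = \norm{\Am} C_0 / \mu,
\end{equation*}
which is exactly the claimed bound $\bar{C}_0$.

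This proof is essentially routine and I do not anticipate a genuine obstacle; the only point requiring a little care is the eigenvalue argument for the inverse Hessian, where I want to make sure the uniform-in-time strong convexity is invoked correctly so that $\mu$ is a valid lower bound on $\lambda_\mathrm{m}(\nabla_{\x\x} f)$ at the specific point $\bar{\x}(\w,t)$ for all $t$. Since \cref{as:admm-problem}\emph{(i)} grants $f \in \mathcal{S}_{\mu,L}(\reals^n)$ uniformly in $t$, this lower bound holds at every argument, so the bound on $\norm{\Hm}$ is uniform as needed. I would simply state these three inequalities and combine them in a single short display.
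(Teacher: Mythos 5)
Your proof is correct and follows essentially the same route as the paper: apply the explicit formula from \cref{pr:derivatives-dual}, use sub-multiplicativity of the induced norm, and bound the inverse Hessian by $1/\mu$ (via uniform strong convexity from \cref{as:admm-problem}) and the mixed derivative by $C_0$ (via \cref{as:time-derivative-f}). The extra care you take in justifying $\norm{\nabla_{\x\x} f(\bar{\x}(\w,t);t)^{-1}} \leq 1/\mu$ uniformly in $t$ is exactly what the paper implicitly invokes.
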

\begin{proof}
See \cref{proof:lem:C0-dual-gradient}.
\end{proof}

\begin{remark}[Full rank $\Am$]\label{rem:rank-deficient-A}
The assumption that $\Am$ be full row rank is necessary to guarantee that $d^f \in \mathcal{S}_{\bar{\mu},\bar{L}}(\reals^p)$. However, when problem~\cref{eq:admm-problem} reduces to $\min_\x f(\x)$ s.t. $\Am \x = \cv$, this assumption \emph{can be relaxed}. In this case we are able to prove that the dual function $d^f$ is \emph{strongly convex in the subspace of the image of $\Am$, i.e., $\im(\Am)$}. Therefore, if $\im(\Am)$ is an invariant set for the trajectory generated by the solver, the solver is contractive and the convergence analysis of this paper applies to show linear convergence. The solvers dual ascent and method of multipliers indeed satisfy these conditions, see \cite{Paper4} for more details.
\end{remark}

\subsection{Dual prediction-correction methodology}\label{subsec:dual-PC}
Applying the same approach of \cref{sec:PC-framework}, we are interested in solving \cref{eq:continuous-time-admm-problem} sampled at times $t_k$, $k \in \mathbb{N}$:
\begin{equation}\label{eq:admm-problem}
	\x_k^*, \y_k^* = \argmin_{\x \in \R^n, \y \in \R^m} \left\{ f(\x;t_k) + \gh(\y) \right\} \quad%
	\text{s.t.} \ \Am\x + \Bm\y = \cv
\end{equation}
where $\x_k^* = \x^*(t_k)$, $\y_k^* = \y^*(t_k)$. The sequence of dual problems is then
\begin{equation}\label{eq:dual-problem}
	\w_k^* = \argmin_{\w \in \R^p} \left\{ d^f(\w;t_k) + d^{\gh}(\w) \right\}
\end{equation}
with $k \in \mathbb{N}$, $\w_k^* := \w^*(t_k)$. As mentioned above, \cref{eq:dual-problem} can be solved by the prediction-correction methods described in \cref{sec:PC-framework}. The goal then is to design a suitable prediction strategy.

The idea is to apply the extrapolation-based prediction of \cref{sec:PC-framework} to the primal cost function $f_k$, hence choosing $\hat{f}_{k+1}(\x) = \sum_{i=1}^I \ell_i f_{k + 1 - i}(\x)$. The corresponding dual prediction problem then is
\begin{equation}\label{eq:dual-prediction-problem}
	\hat{\w}_{k+1}^* = \argmin_{\w \in \R^p} \left\{ \hat{d}^f_{k+1}(\w) + \hat{d}^{\gh}(\w) \right\}
\end{equation}
with $\hat{d}^f_{k+1}(\w) = \hat{f}^*_{k+1}(\Am^\top \w) - \langle \w, \cv \rangle$.

\subsection{Convergence analysis}
The following result characterizes the convergence in terms of the primal and dual variables.

\begin{theorem}\label{th:dual-convergence}
Consider the problem~\eqref{eq:admm-problem}. Apply the prediction-correction method defined in \cref{sec:PC-framework} to the dual problem~\cref{eq:dual-problem}, with extrapolation-based prediction applied to $f$. Let $\Op(\lip, \beta, \chi)$ be a suitable dual solver with contraction rates $\bar{\zeta}$ and $\bar{\xi}$ given in~\cref{lem:contraction-results} for $d^f \in \mathcal{S}_{\bar{\mu}, \bar{L}}(\reals^p)$. Let \cref{as:admm-problem} hold.

Choose the prediction and correction horizons such that
$$
	\bar{\zeta}(\Nc) \bar{\zeta}(\Np) < 1.
$$
Then the dual trajectory $\{ \w_k \}_{k \in \mathbb{N}}$ generated by the dual prediction-correction method converges to a neighborhood of the optimal trajectory $\{ \w_k^* \}_{k \in \mathbb{N}}$, whose radius is upper bounded as
\begin{equation*}
	\limsup_{k \to \infty} \norm{\w_k - \w_k^*} = \frac{\bar{\zeta}(\Nc)}{\bar{\mu}} \frac{[ \bar{\zeta}(\Np) \bar{C}_0 \Ts + \bar{\xi}(\Np) (\norm{\Am}/\mu) C(I) \Ts^I]}{1 - \bar{\zeta}(\Nc) \bar{\zeta}(\Np)}.
\end{equation*}
Moreover, the primal trajectories $\{\x_k\}_{k \in \mathbb{N}}$, $\{ \Bm \y_k\}_{k \in \mathbb{N}}$ converge to a neighborhood of the optimal trajectories $\{\x_k^*\}_{k \in \mathbb{N}}$ $\{ \Bm \y_k^*\}_{k \in \mathbb{N}}$, whose radii are upper bounded as
\begin{align*}
	&\limsup_{k \to \infty} \norm{\x_k - \x_k^*} = (\norm{\Am} / \mu) \limsup_{k \to \infty} \norm{\w_k - \w_k^*}, \\
	&\limsup_{k \to \infty} \norm{\Bm (\y_k - \y_k^*)} = \norm{\Bm} (\norm{\Am}^2 / \mu + 1 / \rho) \limsup_{k \to \infty} \norm{\w_k - \w_k^*}.
\end{align*}
\end{theorem}
\begin{proof}
See \cref{proof:th:dual-convergence}.
\end{proof}

We conclude this section showing an example of online algorithm that results when applying the prediction-correction approach in the dual space.

\begin{example}[Prediction-correction ADMM]\label{ex:pc-admm}
The well known alternating direction method of multipliers (ADMM) applied to $\min_{\x, \y} f(\x) + \gh(\y)$, $\text{s.t.}\  \Am \x + \Bm \y = \cv$ is characterized by the updates \cite[eq.~(8)]{bastianello_asynchronous_2021}
\footnote{See also the arXiv version of \cite{bastianello_asynchronous_2021} which reports the derivation of eq.~(8) in Appendix~A \url{https://arxiv.org/abs/1901.09252}.}
\begin{align}
    &\x^\ell \!=\! \argmin_{\x \in \reals^n} \!\left\{\! f(\x) + \frac{\rho}{2} \norm{\Am \x - \z^\ell / \rho - \cv}^2 \right\}, \quad\w^\ell \!=\! \z^\ell - \rho (\Am \x^\ell - \cv) \nonumber \\
    &\y^\ell = \argmin_{\y \in \reals^m} \left\{ \gh(\y) + \frac{\rho}{2} \norm{\Bm\y - (2\w^\ell - \z^\ell) / \rho}^2 \right\}, \quad \uv^\ell \!=\! 2\w^\ell - \z^\ell - \rho \Bm \y^\ell, \nonumber \\
    &\z^{\ell+1} = \z^\ell + 2(\uv^\ell - \w^\ell), \quad \ell \in \mathbb{N}, \label{eq:admm-updates}
\end{align}
where only the primal costs $f$ and $\gh$ play an explicit role, and where $\mathbold{w}^\ell$ is the vector of dual variables of the problem. Therefore, when applying ADMM as a solver for both the prediction and correction problems, we apply~\cref{eq:admm-updates} replacing $f$ with $\hat{f}_{k+1}(\x) = \sum_{i=1}^I \ell_i f_{k + 1 - i}(\x)$ and $f_{k+1}(\x)$, respectively.
\end{example}


\section{Numerical Results}\label{sec:simulations}
In this section, we present extensive numerical results to showcase the performance of the proposed prediction strategy. In particular, We apply our algorithm to three synthetic benchmarks, stemming from time-varying regularized least-squares, time-varying online learning with ADMM, and online robotic tracking, as well as one real-data benchmark stemming from online graph signal processing. 

We compare our prediction strategy to other available ones, namely one-step-back prediction~\cite{shalev_online_2011}, Taylor-based prediction using either the exact computations for the time-derivatives and backward finite difference~\cite{simonetto_class_2016, simonetto_prediction_2017}, the simplified prediction strategy of~\cite{Lin2019}, and two ZeaD prediction formulas~\cite{Qi2019}. Other prediction strategies do exist, but they would typically involve more complex computations or more memory (i.e., longer time horizons). While we do not compare all the methods in all the examples, the interested reader is referred to the \texttt{tvopt} Python package\footnote{Code available here \url{https://github.com/nicola-bastianello/tvopt}.} \cite{bastianello_tvopt_2021} which provides all the tools for more extensive comparisons.

\subsection{Time-varying regularized least-squares}
We consider the composite problem (cf. \cite{simonetto_class_2016}):
\begin{equation}\label{eq:f-g-simulations}
	f(\x;t) = \norm{\x - \bv(t)}^2 / 2 + \epsilon \log(1 + \exp(\langle \pmb{1}_n, \x \rangle)) \quad \text{and} \quad g(\x) = \nu \norm{\x}_1
\end{equation}
with $n = 20$, and where $\bv(t) \in \R^n$ is a signal with sinusoidal components (with angular velocity $\omega = 0.02 \pi$ and randomly generated phases), $\epsilon = 0.75$, $\nu = 0.5$. The function $f$ has $\mu = 1$, $L = 1 + \epsilon N / 4$, $C_0 = \omega$. The prediction and correction problems are solved using the proximal gradient method (a.k.a. forward-backward splitting) \cite{bauschke_convex_2017} with step-size $\alpha = 2 / (L + \mu)$, and $\Np = [5,20,40]$, $\Nc = 5$. We run the simulations for three values of the sampling time $\Ts = [0.002, 0.02, 0.2]$.

We compare the proposed extrapolation-based prediction with order $I = 2$ (\textit{i.e.} $\hat{f}_{k+1} = 2 f_k - f_{k-1}$) and order $I = 3$ (\textit{i.e.} $\hat{f}_{k+1} = 3 f_k - 3 f_{k-1} + f_{k-2}$) against the following methods:
\begin{itemize}
    \item ``One-step-back'': the predictive online gradient characterized by $\x_{k+1} = \x_k - \alpha \nabla f_k(\x_k)$ \cite[p.~132]{shalev_online_2011};
    
    \item ``Correction-only'': the online gradient characterized by\footnote{Note how it differs from the ``one-step-back'' since the gradient of $f_{k+1}$ is used instead of the gradient of $f_k$.} $\x_{k+1} = \x_k - \alpha \nabla f_{k+1}(\x_k)$ \cite[eq.~(2)]{dallanese_optimization_2019};

    \item ``Taylor'' (of order $2$): the prediction-correction method using the Taylor expansion-based prediction (see \cref{rem:taylor-prediction}) \cite{simonetto_class_2016}.
\end{itemize}
In \cref{fig:tracking-error} we report a first comparison in terms of the tracking error evolution for the different methods, with $\Ts = 0.2$ and $\Np = 20$ (for the methods using prediction). As we can see, an extrapolation of the third order outperforms all other methods, while in general the prediction-correction methods outperform the one-step-back and correction-only approaches.
\begin{figure}[!ht]
    \centering
    \includegraphics[scale=0.6]{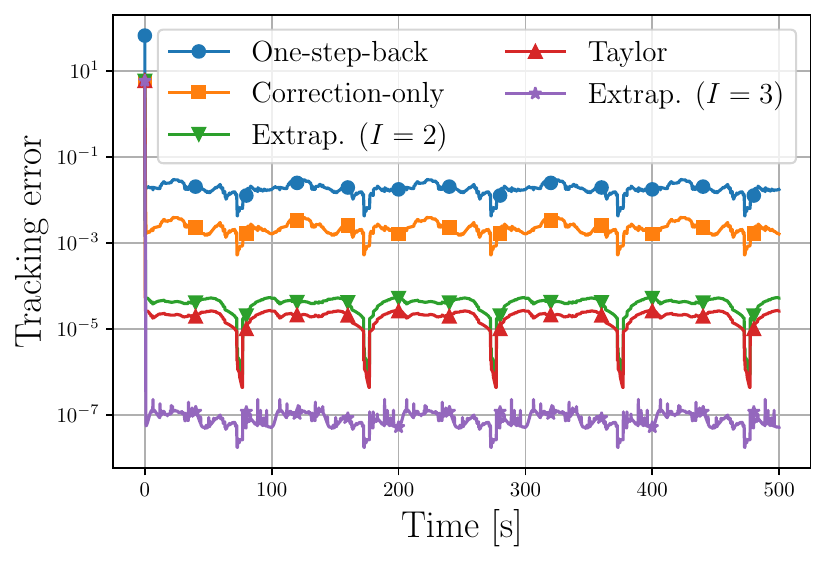}
    \caption{Tracking error comparison with $\Ts = 0.2$ and $\Np = 20$.}
    \label{fig:tracking-error}
\end{figure}
These observations hold up in \cref{tab:asymptotic-error}, which reports the asymptotic tracking error for the different strategies. Combining prediction and correction achieves better results; moreover, the larger the sampling time is, the larger the asymptotic error, which is in accordance with the theory. Also we observe how extrapolation with $I=3$ can boost performance, especially when $\Np$ is large, with respect to Taylor -- this is due to the fact that in \cref{th:extrapolation-convergence} the asymptotic error depends on $\Ts^I$. We also recall, by \cref{rem:computational-comparison}, that the computational complexity of building a Taylor-based prediction exceeds that of the extrapolation-based prediction, since the former needs access to second order derivatives while the latter only to the gradient, so in practice it may not be reasonable to go beyond a Taylor prediction of order $2$.

\begin{table}[!ht]
\centering
\caption{Comparison of asymptotic errors observed in the numerical simulations for~\cref{eq:f-g-simulations}.}
\label{tab:asymptotic-error}

\scalebox{0.8}{
\begin{tabular}{cccc}
	\toprule
	Method & \multicolumn{3}{c}{$\Ts = 0.2$}  \\
	\toprule
	& $\Np=5$ & $\Np=20$ & $\Np=40$  \\
	\toprule

	One-step-back & $3.39 \times 10^{-2}$ & $3.02 \times 10^{-2}$ & $3.02 \times 10^{-2}$ \\
	
	Correction-only & $3.96 \times 10^{-3}$ & $3.96 \times 10^{-3}$ & $3.96 \times 10^{-3}$ \\
	
	Taylor &$4.21 \times 10^{-4}$ & $5.45 \times 10^{-5}$ & $5.45 \times 10^{-5}$ \\
	
	Extrapolation $I\!=\!2$ &$4.19 \times 10^{-4}$ & $2.72 \times 10^{-5}$ &  $2.72 \times 10^{-5}$\\
	
	Extrapolation $I\!=\!3$ &$4.18 \times 10^{-4}$ & $2.35 \times 10^{-7}$ & $5.25 \times 10^{-7}$ \\
\toprule
	 &  \multicolumn{3}{c}{$\Ts = 0.02$}  \\
	\toprule
	& $\Np=5$ & $\Np=20$ & $\Np=40$  \\
	\toprule

	One-step-back &  $3.42 \times 10^{-3}$ & $3.02 \times 10^{-3}$ & $3.02 \times 10^{-3}$ \\
	
	Correction-only  & $4.02 \times 10^{-4}$ & $4.02 \times 10^{-4}$ & $4.02 \times 10^{-4}$ \\
	
	Taylor  & $4.28 \times 10^{-5}$ & $6.67 \times 10^{-7}$ & $6.63 \times 10^{-7}$ \\
	
	Extrapolation $I\!=\!2$ & $4.26 \times 10^{-5}$ & $3.39 \times 10^{-7}$ & $3.31 \times 10^{-7}$ \\
	
	Extrapolation $I\!=\!3$ & $4.24 \times 10^{-5}$ & $6.82 \times 10^{-8}$ & $5.48 \times 10^{-10}$ \\
\toprule
	  & \multicolumn{3}{c}{$\Ts = 0.002$} \\
	\toprule
	& $\Np=5$ & $\Np=20$ & $\Np=40$ \\
	\toprule 

	One-step-back & $3.15 \times 10^{-4}$ & $2.78 \times 10^{-4}$ & $2.78 \times 10^{-4}$\\
	
	Correction-only  & $3.71 \times 10^{-5}$ & $3.71 \times 10^{-5}$& $3.71 \times 10^{-5}$\\
	
	Taylor & $3.91 \times 10^{-6}$ & $9.46 \times 10^{-9}$ & $5.62 \times 10^{-9}$\\
	
	Extrapolation $I\!=\!2$ & $3.91 \times 10^{-6}$ & $7.55 \times 10^{-9}$ & $2.81 \times 10^{-9}$\\
	
	Extrapolation $I\!=\!3$ & $3.91 \times 10^{-6}$ & $6.33 \times 10^{-9}$& $1.67 \times 10^{-12}$\\

	\bottomrule
\end{tabular}}
\end{table}

\subsection{Online graph signal processing}

\begin{figure}[!ht]
    \centering
    \includegraphics[scale=0.6]{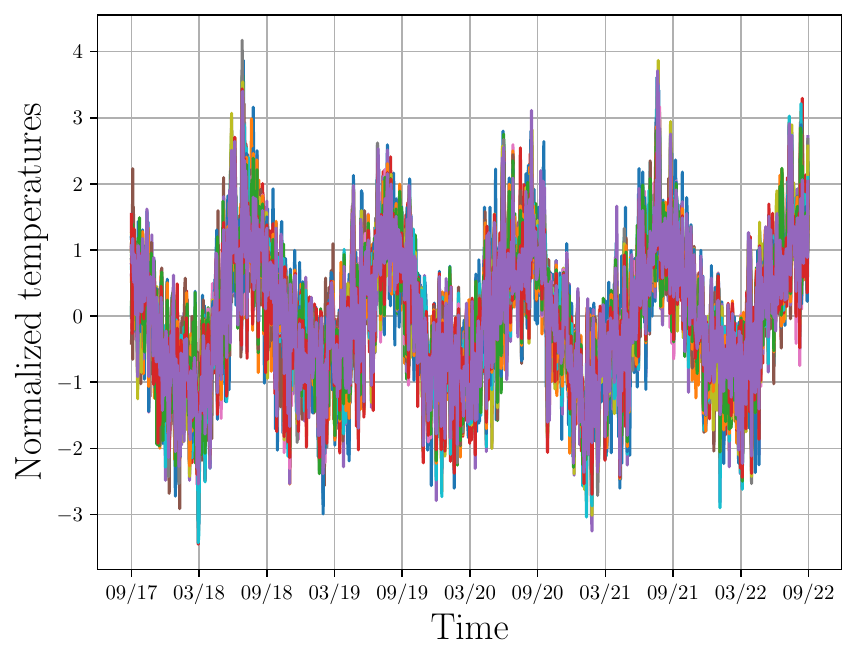}
    \caption{Normalized temperatures over the range Sep.~2017 to Sep.~2022.}
    \label{fig:temperatures}
\end{figure}

As a second example, we consider an online graph signal processing problem, in which the goal is to learn the time-varying topology of a graph from signals observed at the nodes \cite{natali_learning_2022}. Formally, we want to reconstruct the topologies of the graphs in the sequence $\{ \mathcal{G}_k = (\mathcal{V}, \mathcal{E}_k, \Sm_k) \}_{k \in \N}$, where $\mathcal{V} = {1, \ldots, N}$ is the set of nodes, $\mathcal{E}_k$ are the edges at time $k$, and $\Sm_k \in \R^{N \times N}$ is the graph shift operator that represents the topology, which we need to reconstruct. By employing the smoothness-based model of \cite[sec.~IV.C]{natali_learning_2022}, learning the time-varying topology requires that we solve the online problem $\min_{\Sm \in \R^{N \times N}} f_k(\Sm) + g(\Sm)$, where
$$
	f_k(\Sm) = \operatorname{tr}\left(\operatorname{diag}\left( \Sm \one \right) \hat{\bm{\Sigma}}_k \right) - \operatorname{tr}\left( \Sm \hat{\bm{\Sigma}}_k \right) + \frac{\lambda_1}{4} \norm{\Sm}_F^2 - \lambda_2 \one^\top \log(\Sm \one)
$$
with $\hat{\bm{\Sigma}} = \frac{1}{K} \Xm_k \Xm_k^\top$ and $\Xm_k \in \R^{N \times K}$ stacks $K$ samples from the nodes' signals at time $k$. Moreover, $g(\Sm) = \iota_{\mathcal{S}}(\Sm)$ is the indicator function of the set $\mathcal{S}$ of non-negative symmetric matrices with zero diagonal\footnote{In practice, we solve a \emph{vectorized} version of this problem, which corresponds to a composite problem of the form~\cref{eq:tv-general-problem-1}, see \cite[eq.~(32)]{natali_learning_2022} for the details.}.

We use the dataset of hourly temperature measurements at $25$ weather stations across Ireland\footnote{https://www.met.ie/climate/available-data/historical-data}, collected from Sep.~2017 to Sep.~2022. \cref{fig:temperatures} depicts the normalized temperatures observed at the different stations over this range. We follow the set-up of \cite[sec.~VI.B]{natali_learning_2022}, using the proximal gradient method as solver, with $\Np = \Nc = 1$. In \cref{tab:asymptotic-error} we compare the proposed prediction-correction method using an extrapolation-based strategy (with $I = 2$ and $I = 3$) with a correction-only approach and with \cite{natali_learning_2022}, which employs a Taylor expansion-based prediction.

\begin{table}[!ht]
\centering
\caption{Comparison of asymptotic errors observed in the numerical simulations for real weather data.}
\label{tab:as-error-weather}

\footnotesize
\begin{tabular}{cccc}
	\toprule
	Method & Min & Mean $\pm$ Std & Max \\
	\toprule
	
	Correction-only 		& $5.293 \times 10^{-3}$ & $7.393 \times 10^{-2} \pm 4.811 \times 10^{-3}$ & $3.616 \times 10^{-1}$  \\
	
	Taylor \cite{natali_learning_2022} & $3.264 \times 10^{-3}$ & $4.478 \times 10^{-2} \pm 2.945 \times 10^{-2}$ & $2.490 \times 10^{-}$  \\
	
	Extrapolation ($I = 2$) & $3.263 \times 10^{-3}$ & $4.478 \times 10^{-2} \pm 2.945 \times 10^{-2}$ & $2.490 \times 10^{-1}$ \\
	
	Extrapolation ($I = 3$) & $2.812 \times 10^{-3}$ & $4.161 \times 10^{-2} \pm 2.691 \times 10^{-2}$ & $2.156 \times 10^{-1}$ \\
	
	\bottomrule
\end{tabular}
\end{table}

\noindent As we can see in \cref{tab:as-error-weather}, introducing a prediction improves the performance over a correction-only approach. And, while the Taylor expansion-based method has very similar performance to the extrapolation with $I = 2$, the use of an additional past cost, with $I = 3$, in turn improves performance. Notice that the use of higher order extrapolation does not yield the same drastic improvement as in the synthetic problem of the previous section, since it is affected by the noise in the real data and the $C(I)$'s can be rather large for $I$ greater than $2$ or $3$.

\subsection{Online learning with ADMM}
Consider now the online linear regression problem
$
    \min_\x \frac{1}{2} \sum_{i = 1}^N \norm{\Am^i \x - \bv_k^i}^2 + \nu \norm{\x}_1,
$
where each agent $i \in \{ 1, \ldots, N\}$ stores the time-varying data set $(\Am^i, \bv_k^i)$, $\Am^i \in \R^{m_i \times n}$, $\bv_k^i \in \R^{m_i}$. Following a cloud-based learning approach, the goal is to solve this problem by relying on a central coordinator that receives and aggregates the results of local computations, without accessing the local data.
Specifically, we reformulate the problem as (cf. \cite[section~8.2]{boyd_distributed_2010})
\begin{align*}
    &\min_{\{ \x^i \}_{i = 1}^N, \y} \frac{1}{2} \sum_{i = 1}^N \norm{\Am^i \x^i - \bv_k^i}^2 + \nu \norm{\y}_1 \\
    &\text{s.t.} \ \x^i = \y, \ i = 1, \ldots, N
\end{align*}
where the $N$ agents are tasked with processing the local data $(\Am^i, \bv_k^i)$ in order to update $\x^i$, and the central coordinator has the role of averaging $\x^i$ and enforcing sparsity with the $\ell_1$-norm. This reformulation of the problem conforms to \cref{eq:admm-problem} and hence we can apply the prediction-correction ADMM discussed in \cref{ex:pc-admm}.

The numerical results described below were derived as follows. The local matrices $\Am^i$ were randomly generated so that $f_k^i(\x^i) := (1/2) \norm{\Am^i \x^i - \bv_k^i}^2 \in \mathcal{S}_{\mu,L}(\R^n)$, and $\bv_k^i = \Am^i \bar{\x}_k + \mathbold{e}_k^i$ where one third of $\bar{\x}_k$'s components are zero and the remaining change in a sinusoidal way, $\mathbold{e}_k^i$ is random normal noise with either medium variance $0.2$, or low variance $0.002$.
We compared the performance of the one-step-back ADMM, the correction-only ADMM, and the prediction-correction ADMM. For the latter we use extrapolation of order $I=2,3$, as well as Taylor predictions based on backward finite-difference~\cite{simonetto_class_2016}. In particular, in this problem setting, extrapolation reads:
\begin{eqnarray}
I=2 &:& \nabla \hat{f}_{k+1}^i(\x) =\Am^i \x^i + 2 \bv_k^i - \bv_{k-1}^i,   \\ 
I=3 &:& \nabla \hat{f}_{k+1}^i(\x) = \Am^i \x^i + 3 \bv_k^i - 3\bv_{k-1}^i + \bv^i_{k-2}.
\end{eqnarray}
For Taylor with $O(\Ts^2)$ and $O(\Ts^3)$ backward finite-difference,
\begin{eqnarray}
I=2 &:& \nabla \hat{f}_{k+1}^i(\x) = \Am^i \x^i + 2 \bv_k^i - \bv_{k-1}^i,   \\ 
I=3 &:& \nabla \hat{f}_{k+1}^i(\x) = \Am^i \x^i + 3 \bv_k^i - 3\bv_{k-1}^i+ \bv^i_{k-2}.
\end{eqnarray}
Note that Taylor with backward finite-difference is the same here as extrapolation, but this is not true in general. Finally, we report ZeaD~\cite{Qi2019} prediction results with $\zeta = 1$ and $\zeta = 2$:
\begin{eqnarray}
\textrm{ZeaD}, I=3 &:& \nabla \hat{f}_{k+1}(\x) = \Am^i \x^i + \frac{2\zeta +3}{2} \bv_k^i - 2\zeta\, \bv_{k-1}^i+ \frac{2\zeta-1}{2}\bv^i_{k-2}.
\end{eqnarray}

\cref{tab:admm} reports the asymptotic error of the compared approaches for different numbers of agents, each endowed with an equal number of data points from a total of $m = 250$ ($m_i = m/N$), with the setting $P=10, C=2$. Similarly to the results of the previous sections, we observe that prediction-correction is in general better than prediction or correction alone. Different prediction strategies work better in different noise and number of agent settings. In this example, extrapolations of order $2$ and $3$ behave in par with the others, and sometimes marginally better. 
Additionally, we notice that a larger number of agents taking part in the solution of the problem can lead to small improvements in the asymptotic error. This is partly explained by observing that the costs $f_k^i$ have a lower value of $C_0$ (the bound on the gradient's variation over time) than the cost defined on the whole data set, which leads to a lower asymptotic bound according to \cref{th:dual-convergence}.

\begin{table}[!ht]
\centering
\caption{Comparison of asymptotic errors with respect to the true signal, for the online linear regression problem solved using  ADMM, for different numbers of agents $N$. $^*$ this is equivalent to Taylor of the same order with backward finite-difference.}
\label{tab:admm}
\footnotesize
\begin{tabular}{cccccccc}
	\toprule
    & \multicolumn{3}{c}{$\sigma = 0.2$} & &  \multicolumn{3}{c}{$\sigma = 0.002$}\\ \cmidrule(lr){2-4}\cmidrule(lr){6-8}
	Method & $N = 1$ & 5 & 10 & & $N = 1$ & 5 & 10\\
	\toprule

    One-step-back 		& $1.37$ & $0.50$ & $0.28$ && $1.06$ & $0.38$ & $0.36$ \\
	
	Correction-only 		& ${\bf 1.29}$ & $0.63$ & $0.20$ && $1.07$ & $0.50$ & $0.45$ \\
	
	Extrapolation$^*$ ($I = 2$) & $1.37$ & ${\bf 0.34}$ & ${\bf 0.12}$ && ${\bf0.93}$ & $0.16$ & $0.13$ \\

    Extrapolation$^*$ ($I = 3$) & $1.37$ & $0.42$ & $0.15$ && ${\bf0.93}$ & ${\bf 0.15}$ & ${\bf 0.12}$ \\

    ZeaD ($\zeta = 1, I = 3$) & $1.37$ & $0.38$ & $0.15$ && ${\bf0.93}$ & $0.16$ & $0.13$ \\

    ZeaD ($\zeta = 2, I = 3$) & $1.35$ & $0.48$ & $0.18$ && ${\bf 0.93}$ & ${\bf0.15}$ & ${\bf 0.12}$ \\
	
	\bottomrule
\end{tabular}
\end{table}

\subsection{Online robotics}
As a fourth example, we rework here the robotic setting considered in \cite{ECC, dixit_online_2019} In particular, we consider a number $N=10$ of mobile robots that follow a leader robot while it moves in a $2D$ space. The problem can be formulated as,
\begin{eqnarray}
\min_{\x\in\mathbb{R}^{2(N+1)}} && f(\x; t_k) :=\sum_{i=1}^N \frac{1}{2} \left(z_{k}^i - \mathbold{v}_i^\top \x^0\right)^2 + \frac{\lambda}{2}\|\x-\x_k\|^2,  \\
\textrm{subject to} && \Am \x = \bv
\end{eqnarray}
where  $\x^i \in \mathbb{R}^2$ is the position of robot $i=0, \ldots, N$, where $0$ represents the leader. The above problem amounts at estimating the position of the leader robot $\x^0$ based on local measurements $z^i_k$ and a linear model $\mathbold{v}_i^\top$, with a suitable $\ell_2$ regularization. In addition, the followers move as to maintain a rigid formation as imposed by the constraint $\Am \x = \bv$. All the details are given in~\cite{ECC}. 

We solve the above problem with a proximal gradient, by projecting over the constraint, employing several prediction and correction methods. In Figure~\ref{fig:robots}, we report the asymptotical tracking error varying the sampling time for a correction-only method, a simplified prediction~\cite{Lin2019}, two extrapolation-based predictions of $2$nd and $3$rd order, respectively, as well as a ZeaD prediction of third order with ($\zeta=1$). In all cases, the number of proximal gradients are $P=20$ for prediction and $C=5$ for correction. 

\begin{figure}[!ht]
    \centering
    \includegraphics[scale=0.6]{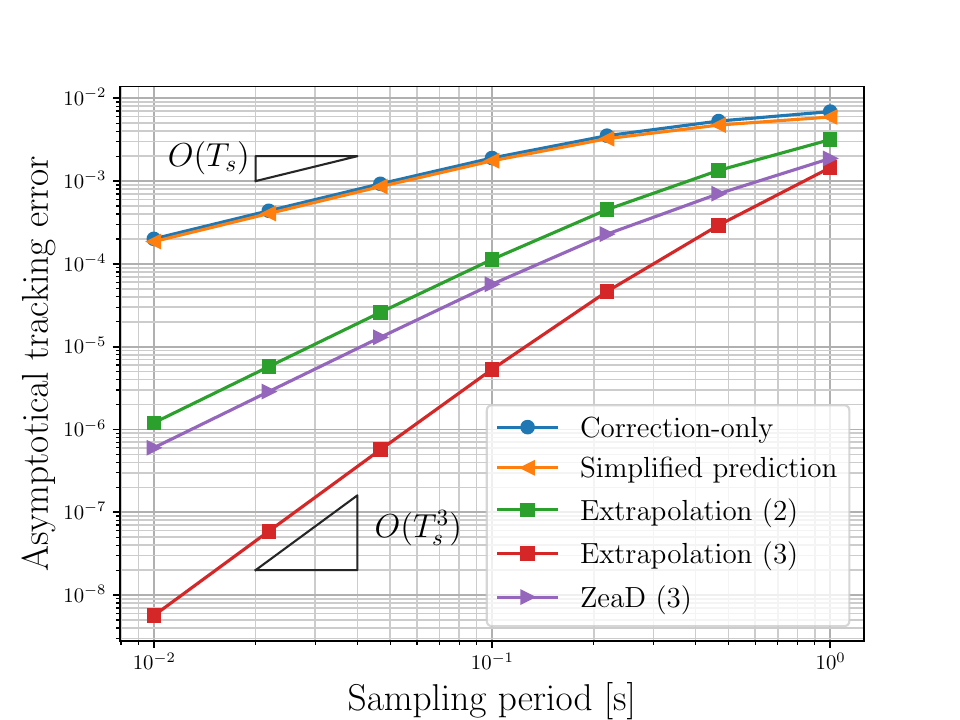}
    \caption{Comparison of several methods to solve an online robotics problem in terms of the asymptotical tracking error vs. the sampling time.}
    \label{fig:robots}
\end{figure}

As one can appreciate, the extrapolation methods achieve the theoretical order of $O(\Ts^2)$ and $O(\Ts^3)$ and do very well with respect to other prediction methods (simplified of second order, and ZeaD of third order), further advocating for this prediction modality.

\appendix


\section{Proofs of \texorpdfstring{\Cref{sec:convergence}}{Section 4}}\label{app:proofs-prediction}

\subsection{Proof of \cref{pr:generic-error-bound}}\label{proof:pr:generic-error-bound}
Consider a prediction-correction strategy where we apply $\Np$ and $\Nc$ steps during prediction and correction, respectively. By \cref{lem:contraction-results}, the following holds:
\begin{subequations}
\begin{align}
	\norm{\hat{\x}_{k+1} - \hat{\x}_{k+1}^*} &\leq \zeta(\Np) \norm{\x_k - \hat{\x}_{k+1}^*} \label{eq:early-termination-P} \\
	\norm{\x_{k+1} - \x_{k+1}^*} &\leq \zeta(\Nc) \norm{\hat{\x}_{k+1} - \x_{k+1}^*} \label{eq:early-termination-C}.
\end{align}
\end{subequations}
The goal now is to bound the prediction error $\norm{\hat{\x}_{k+1} - \x_{k+1}^*}$. If $\Np = 0$ then no prediction steps are applied, and thus, using the triangle inequality, we can write:
\begin{align*}
	\norm{\hat{\x}_{k+1} - \x_{k+1}^*} = \norm{\x_k - \x_{k+1}^*} &\leq \norm{\x_k - \x_k^*} + \norm{\x_{k+1}^* - \x_k^*} \leq \norm{\x_k - \x_k^*} + \sigma_k \\
	&= \zeta(\Np) \norm{\x_k - \x_k^*} + \zeta(\Np)\sigma_k + \xi(\Np) \tau_k
\end{align*}
where we used the facts that $\zeta(\Np) = 1$ and $\xi(\Np) = 0$ if $\Np = 0$ to derive the last equality (cf. \cref{lem:contraction-results}).
Consider now the case of $\Np > 0$. By the triangle inequality and the early termination inequality~\cref{eq:early-termination-P}, the following chain of inequalities holds:
\begin{align*}
	\norm{\hat{\x}_{k+1} - \x_{k+1}^*} &\leq \norm{\hat{\x}_{k+1} - \hat{\x}_{k+1}^*} + \norm{\hat{\x}_{k+1}^* - \x_{k+1}^*} %
	\leq \zeta(\Np) \norm{\x_k - \hat{\x}_{k+1}^*} + \tau_k \\
	&\leq \zeta(\Np) \Big( \norm{\x_k - \x_k^*} + \norm{\x_k^* - \x_{k+1}^*} + \norm{\x_{k+1}^* - \hat{\x}_{k+1}^*} \Big) + \tau_k \\
	&\leq \zeta(\Np) \norm{\x_k - \x_k^*} + \zeta(\Np) \sigma_k + (1+\zeta(\Np)) \tau_k \\
	&= \zeta(\Np) \norm{\x_k - \x_k^*} + \zeta(\Np) \sigma_k + \xi(\Np) \tau_k
\end{align*}
where the last equality follows by the fact that $\xi(\Np) = 1 + \zeta(\Np)$ if $\Np > 0$ (cf. \cref{lem:contraction-results}).
Therefore for any $\Np \geq 0$ we can bound the prediction error as
\begin{equation}\label{eq:prediction-error}
	\norm{\hat{\x}_{k+1} - \x_{k+1}^*} \leq \zeta(\Np) \norm{\x_k - \x_k^*} + \zeta(\Np) \sigma_k + \xi(\Np) \tau_k
\end{equation}
and combining~\cref{eq:prediction-error} with~\cref{eq:early-termination-C} for the correction step yields
\begin{equation}\label{eq:general-tracking-error}
	\norm{\x_{k+1} - \x_{k+1}^*} \leq \zeta(\Nc) \zeta(\Np) \norm{\x_k - \x_k^*} + \zeta(\Nc) \left( \zeta(\Np) \sigma_k + \xi(\Np) \tau_k \right),
\end{equation}
from which the thesis. \endstatement

\subsection{A supporting result}

\begin{theorem}\label{th:solution-mapping}
Let $f \in \mathcal{S}_{\mu,L}(\R^n)$ and $g \in \Gamma_0(\R^n)$, then the solution mapping
$
	S(\p) = \left\{ \y \ | \ \nabla_\x f(\y) + \partial g(\y) \ni \p \right\}
$
of the parameterized generalized equation $\nabla_\x f(\y)$ $+ \partial g(\y) \ni \p$ is single-valued and $\mu^{-1}$-Lipschitz continuous.
\end{theorem}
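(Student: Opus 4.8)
The plan is to recognize the generalized equation $\nabla_\x f(\y) + \partial g(\y) \ni \p$ as the first-order optimality condition for the strongly convex program $\min_\y \{ f(\y) + g(\y) - \langle \p, \y \rangle \}$, and then to exploit strong monotonicity of $\nabla_\x f$ to control the dependence of the minimizer on the parameter $\p$.

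First I would establish single-valuedness. For fixed $\p$, the function $\phi_\p(\y) := f(\y) + g(\y) - \langle \p, \y \rangle$ is $\mu$-strongly convex (the linear term and the convex $g$ do not alter the strong convexity modulus inherited from $f$) and belongs to $\Gamma_0(\R^n)$, since $f$ is finite and continuous and $g \in \Gamma_0(\R^n)$. A closed, proper, $\mu$-strongly convex function is coercive and therefore attains a unique minimizer. By Fermat's rule, and using that $f$ is differentiable so that $\partial \phi_\p(\y) = \nabla_\x f(\y) + \partial g(\y) - \p$, we have that $\y$ minimizes $\phi_\p$ iff $\zero \in \nabla_\x f(\y) + \partial g(\y) - \p$, which is exactly the condition defining $S(\p)$. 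Hence $S(\p)$ is the singleton consisting of that unique minimizer.

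It remains to show the $\mu^{-1}$-Lipschitz bound. I would pick any two parameters $\p_1, \p_2$ and set $\y_i := S(\p_i)$, so that there exist subgradients $\z_i \in \partial g(\y_i)$ with $\nabla_\x f(\y_i) + \z_i = \p_i$. Subtracting the two identities and pairing with $\y_1 - \y_2$ gives
\[
	\langle \nabla_\x f(\y_1) - \nabla_\x f(\y_2), \y_1 - \y_2 \rangle + \langle \z_1 - \z_2, \y_1 - \y_2 \rangle = \langle \p_1 - \p_2, \y_1 - \y_2 \rangle.
\]
The first term is at least $\mu \norm{\y_1 - \y_2}^2$ by the $\mu$-strong convexity of $f$ (equivalently, the $\mu$-strong monotonicity of $\nabla_\x f$), while the second is nonnegative by monotonicity of $\partial g$. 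Bounding the right-hand side with Cauchy-Schwarz then yields $\mu \norm{\y_1 - \y_2}^2 \leq \norm{\p_1 - \p_2} \, \norm{\y_1 - \y_2}$, and dividing through by $\norm{\y_1 - \y_2}$ gives $\norm{S(\p_1) - S(\p_2)} \leq \mu^{-1} \norm{\p_1 - \p_2}$.

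I do not expect a serious obstacle here; the only points requiring care are confirming that a minimizer exists (guaranteed by coercivity of the strongly convex, closed, proper objective, which in turn relies on $g$ being proper so that $\operatorname{dom} \phi_\p \neq \emptyset$) and handling the degenerate case $\y_1 = \y_2$ in the final division (where the bound holds trivially). Both are routine, so the substance of the argument reduces to the single strong-monotonicity estimate above.
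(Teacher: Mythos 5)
Your proof is correct and follows essentially the same route as the paper's: both reduce single-valuedness to strong convexity of the perturbed objective and obtain the Lipschitz bound from the estimate $\mu\norm{\y_1-\y_2}^2 \leq \langle \y_1-\y_2, \p_1-\p_2\rangle$, the only cosmetic difference being that you invoke monotonicity of $\partial g$ directly where the paper rederives it from the subgradient inequality.
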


\begin{proof}
The proof follows from~\cite[Theorem~1]{nesterov_smooth_2005}.
\end{proof}

\subsection{Proof of \texorpdfstring{\cref{lem:distance-optima}}{Lemma~4.1}}\label{proof:lem:distance-optima}
The following proof is an extension of~\cite[Theorem~2F.10]{doncev_implicit_2014} when $\nabla_{t\x}f$ exists everywhere.
First, we define the auxiliary functions:
$
	\Psi(\y) = \nabla_\x f_{k+1}(\y) + \partial g(\y)$ and $\psi(\y) = \nabla_\x f_k(\y) - \nabla_\x f_{k+1}(\y)
$
and, by the fact that $\nabla_\x f_k(\x_k^*) + \partial g(\x_k^*) \ni \zero$ and $\nabla_\x f_{k+1}(\x_{k+1}^*) + \partial g(\x_{k+1}^*) \ni \zero$, we have $(\psi+\Psi)(\x_{k+1}^*) \ni \psi(\x_{k+1}^*)$.
We define now the function $F(\y) = (\psi+\Psi)(\y)$ and consider the parametric generalized equation $F(\y) + \p \ni \zero$. Under \cref{as:problem-properties,as:time-derivative-f}, \cref{th:solution-mapping} implies that the solution mapping $\p \mapsto \y(\p)$ for this generalized equation is everywhere single valued and Lipschitz continuous with constant $\mu^{-1}$, \emph{i.e.} $\norm{\y(\p) - \y(\p')} \leq \norm{\p - \p'}/\mu.$
Therefore, setting $\p = \pmb{0}$ and $\p' = -\psi(\x_{k+1}^*)$, implies 
$$
	\norm{\x_{k+1}^* - \x_k^*} \leq  \norm{\psi(\x_{k+1}^*)} / \mu \leq C_0 \Ts / \mu.
$$
where we used the fact that:
$
	\norm{\psi(\x_{k+1}^*)} = \norm{\nabla_\x f_k(\x_{k+1}^*) - \nabla_\x f_{k+1}(\x_{k+1}^*)} \leq C_0 \Ts
$, see~\cite[eq.~(59)]{simonetto_prediction_2017}. \endstatement

\subsection{Proof of \texorpdfstring{\cref{lem:extrapolation-error}}{Lemma~4.4}}\label{proof:lem:extrapolation-error}
Define the functions
$
	\Psi(\y) = \nabla_\x f_{k+1}(\y)  + \partial g(\y)$ and $\psi(\y) = \nabla_\x \hat{f}_{k+1}(\y) - \nabla_\x f_{k+1}(\y);
$
by the optimality conditions of the correction and prediction problems we have that $(\Psi + \psi)(\x_{k+1}^*) \ni \psi(\x_{k+1}^*)$ and $(\Psi + \psi)(\hat{\x}_{k+1}^*) \ni \pmb{0}$. Then, applying \cref{th:solution-mapping} to the parametrized generalized equation $(\Psi + \psi)(\y) \ni \p$ we have the following bound $\norm{\hat{\x}_{k+1}^* - \x_{k+1}^*} \leq \norm{\psi(\x_{k+1}^*)} / \mu$. By the interpolation error formula~\cref{eq:interpolation-error}, we have the bound:
\begin{equation}\label{eq:extrapolation-bound}
\begin{split}
	\norm{\psi(\x_{k+1}^*)} &= \norm{\nabla_\x \hat{f}_{k+1}(\x_{k+1}^*) - \nabla_\x f_{k+1}(\x_{k+1}^*)} \\ &\leq \norm{\frac{1}{I!} \frac{\partial^{(I)}}{\partial t^{(I)}} \nabla_{\x} f(\x_{k+1}^*;\tau) \omega_I(t_{k+1})} \leq C(I) \Ts^I
\end{split}
\end{equation}
where $\tau \in [t_{k-1}, t_{k+1}]$, and we used the facts that $\omega_I(t_{k+1}) = \prod_{i=1}^I (t_{k+1} - t_{k+1-i}) = I! \Ts^I$ (cf.~\cref{eq:interpolation-error}) and \cref{eq:bounded-I-order-derivatives} to derive the last inequality. \endstatement

\subsection{Proof of \texorpdfstring{\cref{th:extrapolation-convergence}}{Theorem~5.9}}\label{proof:th:extrapolation-convergence}

By \cref{lem:distance-optima,lem:extrapolation-error} we know that there exist $\sigma, \tau \in [0,+\infty)$ such that $\norm{\x_{k+1}^* - \x_k^*} \leq \sigma$ and $\norm{\hat{\x}_{k+1}^* - \x_{k+1}^*} \leq \tau$. As such, \cref{pr:generic-error-bound} holds. We can then use Equation~\eqref{eq:general-tracking-error} with our bounds for $\sigma, \tau$.  

If then, we choose $\Np$ and $\Nc$ such that $\zeta(\Np) \zeta(\Nc) < 1$, then the error converges and using the geometric series the thesis of \cref{th:extrapolation-convergence} follows. \endstatement

\section{Proofs of \texorpdfstring{\Cref{sec:dual}}{Section~5}}\label{app:proofs-dual}

\subsection{Proof of \texorpdfstring{\cref{lem:C0-dual-gradient}}{Lemma~4}}\label{proof:lem:C0-dual-gradient}
Notice that $\bar{\x}(\w,t)$ is the unique solution to the equation
$
	\psi(\x;\w,t) := \nabla_{\x} f(\x;t) - \Am^\top \w = \zero,
$
where $\psi(\x;\w,t)$ is differentiable in $\x$ with $\nabla_{\x} \psi(\x;\w,t) = \nabla_{\x\x} f(\x;t)$ non-singular. Now set $\bar{\x} = \bar{\x}(\w,t)$ to simplify the notation. Fixing $\w$ and applying \cite[Theorem~1.B.1]{doncev_implicit_2014} w.r.t. $t$ gives
$
	{\partial \bar{\x}}/{\partial t} \!=\! - \left[\nabla_\x \psi(\bar{\x};\w,t)\right]^{-1} \nabla_t \psi(\bar{\x};\w,t) = - \nabla_{\x\x} f(\bar{\x};t)^{-1} \nabla_{t\x} f(\bar{\x};t),
$
and as a consequence, we have
\begin{equation}\label{eq:dual-gradient-time-derivative}
	\nabla_{t\w} d^f(\w;t) = - \Am \nabla_{\x\x} f(\bar{\x}(\w,t);t)^{-1} \nabla_{t\x} f(\bar{\x}(\w,t);t).
\end{equation}

Using~\cref{eq:dual-gradient-time-derivative} and the sub-multiplicativity of the norm we have
\begin{align*}
	\norm{\nabla_{t\w} d^f(\w;t)} &= \norm{\Am \nabla_{\x\x} f(\bar{\x}(\w,t);t)^{-1} \nabla_{t\x} f(\bar{\x}(\w,t);t)} \\
	&\leq \norm{\Am} \norm{\nabla_{\x\x} f(\bar{\x}(\w,t);t)^{-1}} \norm{\nabla_{t\x} f(\bar{\x}(\w,t);t)} \leq \norm{\Am} C_0 / \mu
\end{align*}
where the last inequality holds by \cref{as:admm-problem}~(i). \endstatement

\subsection{Proof of \texorpdfstring{\cref{th:dual-convergence}}{Theorem~2}}\label{proof:th:dual-convergence}
As observed in \cref{subsec:dual-problem-formulation}, under \cref{as:admm-problem} the dual cost $d^f_k$ is $\bar{\mu}$-strongly convex and $\bar{L}$-smooth, and $d^{\gh}_k \in \Gamma_0(\reals^p)$. Therefore, we can follow the same derivation in \cref{proof:th:extrapolation-convergence} to show that~\cref{eq:general-tracking-error} holds for the dual problem, with
\begin{equation}\label{eq:dual-general-tracking-error}
	\norm{\w_{k+1} - \w_{k+1}^*} \leq \zeta(\Nc) \zeta(\Np) \norm{\w_k - \w_k^*} + \zeta(\Nc) \left( \zeta(\Np) \bar{\sigma} + \xi(\Np) \bar{\tau} \right).
\end{equation}

The goal now is to provide a bound to both $\bar{\sigma}$ and $\bar{\tau}$. First, since \cref{lem:C0-dual-gradient} holds, we can apply \cref{lem:distance-optima} to prove that
$$
	\norm{\w_{k+1}^* - \w_k^*} \leq \bar{C}_0 \Ts /\bar{\mu} =: \bar{\sigma}.
$$
To bound $\bar{\tau}$, following the derivation in \cref{proof:lem:distance-optima} we can see that
$$
	\norm{\hat{\w}_{k+1}^* - \w_{k+1}^*} \leq \norm{\psi(\w_{k+1}^*)} / \bar{\mu}
$$
where $\psi(\w) = \nabla_\w \hat{d}^f_{k+1}(\w) - \nabla_\w d^f_{k+1}(\w)$. Using~\cref{eq:dual-gradient} we further know that $\nabla_\w d^f_{k+1}(\w_{k+1}^*) = \Am \bar{\x} - \cv$ and $\nabla_\w \hat{d}^f_{k+1}(\w_{k+1}^*) = \Am \bar{\bar{\x}} - \cv$, with $\bar{\x} = \argmin_\x F_{k+1}(\x)$ and $\bar{\bar{\x}} = \argmin_\x \hat{F}_{k+1}(\x)$, having defined
\begin{align*}
	F_{k+1}(\x) &= f_{k+1}(\x) - \langle \Am^\top \w_{k+1}^*, \x \rangle, \\
	\hat{F}_{k+1}(\x) &= \hat{f}_{k+1}(\x) - \langle \Am^\top \w_{k+1}^*, \x \rangle = \sum_{i = 1}^I \ell_i F_{k + 1 - i}(\x).
\end{align*}
Using the sub-multiplicativity of the norm we have $\norm{\psi(\w_{k+1}^*)} \leq \norm{\Am} \norm{\bar{\x} - \bar{\bar{\x}}}$, and we need to bound $\norm{\bar{\x} - \bar{\bar{\x}}}$.

Defining $\Gamma(\y) := \nabla_\x F_{k+1}(\y)$ and $\gamma(\y) := \nabla_\x \hat{F}_{k+1}(\y) - \nabla_\x F_{k+1}(\y)$, we can see that $\bar{\x}$ and $\bar{\bar{\x}}$ are the solutions of the generalized equation $(\Gamma + \gamma)(\y) = \p$ when $\p = \pmb{0}$ and $\p = \gamma(\bar{\bar{\x}})$. Therefore, applying the inverse function theorem \cite[Theorem~1A.1]{doncev_implicit_2014} we have $\norm{\bar{\x} - \bar{\bar{\x}}} \leq \norm{\gamma(\bar{\bar{\x}})} / \mu$. Finally, we have
$$
	\norm{\gamma(\bar{\bar{\x}})} = \norm{\nabla_\x \hat{F}_{k+1}(\bar{\x}) - \nabla_\x F_{k+1}(\bar{\x})} = \norm{\nabla_\x \hat{f}_{k+1}(\bar{\x}) - \nabla_\x f_{k+1}(\bar{\x})} \leq C(I) \Ts^I
$$
where the inequality holds by~\cref{eq:extrapolation-bound}.

Putting everything together yields the prediction error bound
$$
	\norm{\hat{\w}_{k+1}^* - \w_{k+1}^*} \leq \norm{\Am} C(I) \Ts^I / (\mu \bar{\mu}) =: \bar{\tau}
$$
and substituting into~\cref{eq:dual-general-tracking-error} yields the dual convergence bound.

The primal convergence bound can then be derived as a consequence of~\cref{eq:dual-general-tracking-error} by using the fact that \cite[Lemma~A.1]{bastianello_primal_2020}
\begin{align*}
	\norm{\x_{k+1} - \x_{k+1}^*} &\leq {\norm{\Am}}/{\mu} \norm{\w_{k+1} - \w_{k+1}^*}, \\
	\norm{\Bm (\y_{k+1} - \y_{k+1}^*)} &\leq \norm{\Bm}( {1}/{\rho} + {\norm{\Am}^2}/{\mu} ) \norm{\w_{k+1} - \w_{k+1}^*}\ \mathrm{\text{(in case $\gh \not\equiv 0$)}}.
\end{align*} \endstatement

\bibliographystyle{elsarticle-harv} 
\bibliography{references}

\end{document}